\newcommand{\mR}{{\mathbb R}}
\newcommand{\mP}{{\mathbb P}}
\newcommand{\II}{\mathbf{1}}
\newtheorem{theorem}{Theorem}
\newtheorem{example}{Example}
\newtheorem{remark}{Remark}[section]
\newtheorem{definition}{Definition}
\newtheorem{proposition}[theorem]{Proposition}
\begin{document}
\title[Stochastic Orderings and generalized Stochastic Precedence]
{Relations Between Stochastic Orderings and generalized Stochastic Precedence}%. Connections with Target-Based Utilities}

\author[]{  Emilio De Santis}
%\address{Department of Mathematics - University La Sapienza, Rome}
\email{desantis@mat.uniroma1.it}

\author[]{Fabio Fantozzi}
\email{fantozzi@mat.uniroma1.it}

\author[]{Fabio Spizzichino\\Department of Mathematics - University La Sapienza, Rome}
%\footnote{Department of Mathematics - University La Sapienza, Rome}
\email{spizzich@mat.uniroma1.it}

\begin{abstract}

The concept of \textit{stochastic precedence} between two real-valued random
variables has often emerged in different applied frameworks. In this paper we consider a slightly
more general, and completely natural, concept of stochastic precedence and
analyze its relations with the notions of stochastic ordering. Such a study
leads us to introducing some special classes of bivariate copulas.
Motivations for our study can arise from different fields.
In particular we consider the frame of Target-Based Approach in decisions under risk.
This approach has been mainly developed under the
assumption of stochastic independence between ``Prospects'' and ``Targets''.
Our analysis concerns the case of stochastic dependence.

\bigskip

\noindent \textsc{Keywords}. Target Based Utilities, Decision Analysis, Non-Symmetric Copulas, Times to Words' Occurrences.

\end{abstract}

\date{\today} \maketitle

\section{Introduction.} \label{sec:Intro}

Let $X_1,X_2$ be two real random variables defined on a same probability
space $(\Omega,\mathcal{F},\mathbb{P})$. We will denote by $F$ the
joint distribution function and by $G_1$, $G_2$ their marginal
distribution functions, respectively. For the sake of notational simplicity,
we will initially concentrate our attention on the case when $G_1,G_2$
belong to the class $\mathcal{G}$ of all the probability distribution
functions on the real line, that are continuous and strictly increasing in the
domain where they are positive and smaller than one. As we shall see later,
we can also consider more general cases, but the present restriction allows us to simplify the
formulation and the proofs of our results. In order to account for some cases of interest
with $\mathbb{P}(X_1=X_2)>0$, we will not assume that the distribution function $F$ is absolutely
continuous.

The random variable $X_1$ is said to \textit{stochastically precede} $X_2$ if
$\mathbb{P}(X_1\leq X_2)\geq1/2$, written $X_1\preceq_{sp}X_2$. The interest of
this concept for applications has been pointed out several times in the
literature (see in particular \cite{AKS}, \cite{BolSin} and \cite{NavRub}).
We recall the reader's attention on the fact that stochastic precedence
does not define a stochastic order in that, for instance, it is not transitive.
However it can be considered in some cases as an interesting condition, possibly alternative
to the usual \textit{stochastic ordering} $X_1\preceq_{st}X_2$, defined by the inequality $G_1(t)\geq
G_2(t),\,\forall t\in\mathbb{R}$, see \cite{ShSh}.

When $X_1,X_2$ are independent the implication $X_1\preceq_{st}X_2\Rightarrow X_1\preceq_{sp}X_2$
holds (see \cite{AKS}). It is also easy to find several other examples of bivariate probability models
where the same implication holds. For instance the condition $X_1\preceq_{st}X_2$ even entails
$\mathbb{P}(X_1\leq X_2)=1$ when $X_1,X_2$ are \textit{comonotonic} (see e.g.
\cite{nelsen2006introduction}), i.e. when $\mP(X_2=G_2^{-1}(G_1(X_1)))$.
On the other hand, cases of stochastic dependence can be found
where the implication $X_1\preceq_{st}X_2\Rightarrow X_1\preceq_{sp}X_2$ fails.
A couple of examples will be presented in Section \ref{sec3}. See also Proposition \ref{BCgamma}.
On the other hand the frame of words' occurrences produces, in a natural way, examples in the same direction, see e.g. \cite{DS2012}.

In this paper we replace the notion $X_1\preceq_{sp}X_2$ with the generalized concept defined as follows
\begin{definition}
For given $\gamma\in[0,1]$, we say that $X_1$ stochastically precedes
$X_2$ at level $\gamma$ if $\mathbb{P}(X_1\leq X_2)\geq\gamma$. This will be written
$X_1\preceq_{sp}^{(\gamma)}X_2$.
\end{definition}

\smallskip

Let $\mathcal{C}$ denote the class of all bivariate copulas (see e.g.
\cite{joe1997,nelsen2006introduction}).
Several arguments along the paper, we be based on the concept of bivariate copula and
the class of all bivariate copulas will be denoted by $\mathcal{C}$.
We say that the pair of random variables $X_1,X_2$, with distributions $G_1,G_2$,
respectively, admits $C\in\mathcal{C}$ as its \textit{connecting} copula
whenever its joint distribution function is given by

\begin{equation}\label{connecting}
F(x_1,x_2)=C(G_1(x_1),G_2(x_2)).
\end{equation}

\medskip

It is well known (see e.g. \cite{nelsen2006introduction}) that the connecting copula is unique
when $G_1$ and $G_2$ are continuous. We will use the notation
\begin{equation}
\label{set}A:=\{(x_1,x_2)\in\mathbb{R}^{2}\,:\,x_1\leq x_2\},
\end{equation}
so that we write
\begin{equation}
\label{eta2}{\mathbb{P}}(X_1\leq X_2)=\int_{A}\,dF(x_1,x_2)=\int_{\mathbb{R}^2}
\mathbf{1}_{A}(x_1,x_2)\,dF(x_1,x_2).
\end{equation}
For given $G_1,G_2\in\mathcal{G}$ and $C\in\mathcal{C}$ we also set
\begin{equation}\label{eta}
\eta(C,G_1,G_2):=\mathbb{P}(X_1\leq X_2),%\\
%\xi(C,G_1,G_2):=\mathbb{P}(X_1=X_2)\, ,\label{xi}
\end{equation}
where $X_1$ and $X_2$ are random variables with distributions $G_1,G_2$
respectively, and connecting copula $C$. Thus the condition
$X_1\preceq_{sp}^{(\gamma)}X_2$ can also be written $\eta(C,G_1,G_2)\geq\gamma$.

Suppose now that $X_1,X_2$ satisfy the condition $X_1\preceq_{st}X_2$.
As a main purpose of this paper we give a lower bound for the probability $\mathbb{P}(X_1\leq X_2)$
in terms of the stochastic dependence between $X_1$ and $X_2$ or, more precisely, in terms of conditions
on the integral $\int_{A\cap[0,1]^2}dC$. More specifically we will analyze different
aspects of the special classes of bivariate copulas, defined as follows.

\begin{definition}\label{def2}
For $\gamma\in[0,1]$, we denote by $\mathcal{L}_{\gamma}
$ the class of all copulas $C\in\mathcal{C}$ such that
\begin{equation}\label{BasicImplication}
\eta(C,G_1,G_2)\geq\gamma
\end{equation}
for all $G_1,G_2\in\mathcal{G}$ with $G_1\preceq_{st} G_2$.
\end{definition}

\medskip

Concerning the role of the concept of copula in our study, we point out the following simple facts.
Consider the random variables $X_1'=\phi(X_1)$ and $X_2'=\phi(X_2)$ where $\phi:\mathbb{R}\rightarrow\mathbb{R}$
is a strictly increasing function. Thus $X_1'\preceq_{st}X_2'$ if and only if $X_1\preceq_{st}X_2$
and $X_1'\preceq_{sp}^{(\gamma)}X_2'$ if and only if $X_1\preceq_{sp}^{(\gamma)}X_2$.
At the same time the pair $X_1',X_2'$ also admits the same connecting copula $C$.

The arguments treated in this paper can reveal of interest in the frame of different applied fields.
Motivations for this study, in particular, had arisen for us from the following two fields:
\begin{itemize}
  \item[i)] the Target-Based Approach in utility theory;
  \item[ii)] comparisons among waiting times to occurrences of \emph{words} in random sequences of \emph{letters}
  from an alphabet.
\end{itemize}

Further applications can arise e.g. in the fields of reliability and in the comparison of pool
obtained by two opposite coalitions.

More precisely the structure of the paper is as follows. In Section
\ref{sec2}, we analyze the main aspects of the class
$\mathcal{L}_{\gamma}$ and present a related characterization. Some
further basic properties will be detailed in Section \ref{sec3},
where a few examples will be also presented. Finally, in Section \ref{sec4}, we will briefly review
Target-Based utilities, pointing out the relations with our work, in the case of stochastic dependence between
targets and prospects. Connections with the field of times to words' occurrences will be discussed in a subsequent note.

\section{A characterization of the class $\mathcal{L}_{\gamma}$.}\label{sec2}

This Section will be devoted to providing a characterization of the
class $\mathcal{L}_{\gamma}$ (see Theorem \ref{Lgamma} and \ref{continuo}) along with related discussions.
We start by detailing a few basic properties of the quantities $\eta(C,G_1,G_2)$, for
$G_1,G_2\in\mathcal{G}$ and $C\in\mathcal{C}$.
In view of the condition $G_1,G_2\in\mathcal{G}$ we can use the change of variables
$u=G_1(x_1)$, $v=G_2(x_2)$. Thus we can rewrite the integral in \eqref{eta2} according to the following

\begin{proposition}\label{eta_cfg}
For given $G_1,G_2\in$ $\mathcal{G}$ and $C\in\mathcal{C}$, one has

\begin{equation}\label{formulautile}
\eta(C,G_1,G_2)=\int_{[0,1]^2}
\mathbf{1}_A(G_1^{-1}(u),G_2^{-1}(v))\,dC(u,v).
\end{equation}
\end{proposition}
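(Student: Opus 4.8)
The plan is to establish the change-of-variables formula \eqref{formulautile} by rewriting the integral in \eqref{eta2} as an integral against the connecting copula $C$. The starting point is the definition \eqref{eta} together with the representation \eqref{eta2}, which expresses $\eta(C,G_1,G_2)=\mathbb{P}(X_1\leq X_2)$ as the integral of the indicator $\mathbf{1}_A$ against the joint distribution $dF$. Since by \eqref{connecting} we have $F(x_1,x_2)=C(G_1(x_1),G_2(x_2))$, the measure $dF$ on $\mathbb{R}^2$ is precisely the pushforward of the copula measure $dC$ on $[0,1]^2$ under the map $\Psi(u,v)=(G_1^{-1}(u),G_2^{-1}(v))$. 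The key step is therefore to invoke the abstract change-of-variables (pushforward) theorem for Lebesgue–Stieltjes integrals, which yields
\begin{equation*}
\int_{\mathbb{R}^2}\mathbf{1}_A(x_1,x_2)\,dF(x_1,x_2)=\int_{[0,1]^2}\mathbf{1}_A\bigl(G_1^{-1}(u),G_2^{-1}(v)\bigr)\,dC(u,v).
\end{equation*}

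To make this rigorous I would first justify that $\Psi$ is the correct transformation. Because $G_1,G_2\in\mathcal{G}$ are continuous and strictly increasing on the relevant domain, the generalized inverses $G_1^{-1},G_2^{-1}$ are well-defined and, crucially, the substitution $u=G_1(x_1)$, $v=G_2(x_2)$ sets up a measure-preserving correspondence between $dF$ and $dC$. Concretely, for a rectangle $(a_1,b_1]\times(a_2,b_2]$ the $F$-measure equals the $C$-increment over $(G_1(a_1),G_1(b_1)]\times(G_2(a_2),G_2(b_2)]$, directly from \eqref{connecting}; since such rectangles generate the Borel $\sigma$-algebra, the identity of the two measures (one being the image of the other under $\Psi$) follows by the standard $\pi$–$\lambda$ extension argument. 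The change-of-variables formula then holds first for indicators of rectangles, then for simple functions, and finally for the bounded measurable indicator $\mathbf{1}_A$ by monotone approximation.

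The main obstacle is the boundary behaviour of the continuity and strict-monotonicity hypotheses in the definition of $\mathcal{G}$. The functions in $\mathcal{G}$ are only required to be continuous and strictly increasing \emph{on the region where they lie strictly between $0$ and $1$}; they may be constant (equal to $0$ or $1$) outside that region, so the generalized inverse $G_i^{-1}$ need not be a genuine inverse everywhere and the map $\Psi$ is only a partial bijection. I would handle this by checking that the set of boundary points $\{u:G_1^{-1}(u)$ is not uniquely determined$\}$ carries no $C$-mass in a way that affects the integral, or more cleanly by restricting attention to the open unit square and noting that the complement is $C$-null along the relevant edges. The compensating simplification is that we never need an explicit Jacobian: the formula is purely a statement about pushforward measures, so the argument is insensitive to whether $F$ is absolutely continuous, which is exactly the generality the paper wants to retain in order to accommodate $\mathbb{P}(X_1=X_2)>0$.
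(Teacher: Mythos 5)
Your argument is correct and is essentially the paper's own: the paper proves this proposition only by the one-line remark that one may use the change of variables $u=G_1(x_1)$, $v=G_2(x_2)$, and your pushforward-measure argument is precisely the rigorous version of that substitution. Your care about the edges where $G_i\in\{0,1\}$ is sound, since the uniform marginals of $C$ give those edges zero mass, so nothing further is needed.
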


\medskip

The use of the next Proposition is two-fold: it will be useful both for characterizing the class
$\mathcal{L}_\gamma $ and establishing lower and upper bounds on the quantity $\eta(C,G_1,G_2)$.

\begin{proposition}\label{eta_ineq}
Let $G_1,G_1',G_2 ,G_2'\in\mathcal{G}$. Then
\begin{align*}
G_2\preceq_{st}G_2'&\Rightarrow\eta(C,G_1,G_2)\leq\eta(C,G_1,G_2');\\
G_1\preceq_{st}G_1'&\Rightarrow\eta(C,G_1,G_2)\geq\eta(C,G_1',G_2).
\end{align*}
\end{proposition}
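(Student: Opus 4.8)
The plan is to exploit the integral representation provided by Proposition~\ref{eta_cfg}, which writes $\eta(C,G_1,G_2)$ as an integral of the indicator $\mathbf{1}_A(G_1^{-1}(u),G_2^{-1}(v))$ against the fixed copula measure $dC$ on the unit square. Since the copula $C$ is held fixed throughout both implications, the measure $dC$ does not change; only the integrand varies as we replace a marginal by a stochastically larger one. Thus the whole statement reduces to a pointwise monotonicity comparison of the two integrands, after which monotonicity of the integral against the nonnegative measure $dC$ finishes the argument. I would prove the two implications in parallel, since they are symmetric in structure.

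For the first implication, recall that $(x_1,x_2)\in A$ means $x_1\le x_2$, so the integrand is $\mathbf{1}_{\{G_1^{-1}(u)\le G_2^{-1}(v)\}}$. First I would translate $G_2\preceq_{st}G_2'$, i.e.\ $G_2(t)\ge G_2'(t)$ for all $t$, into the equivalent statement about quantile functions, namely $G_2^{-1}(v)\le (G_2')^{-1}(v)$ for every $v\in(0,1)$; this is the standard fact that the stochastic order is equivalent to the pointwise ordering of right-continuous inverses, and under the assumption $G_1,G_2\in\mathcal{G}$ the inverses are genuine and well behaved. Given this, for each fixed $(u,v)$ the inequality $G_1^{-1}(u)\le G_2^{-1}(v)$ implies $G_1^{-1}(u)\le (G_2')^{-1}(v)$, so
\begin{equation*}
\mathbf{1}_A\bigl(G_1^{-1}(u),G_2^{-1}(v)\bigr)\le \mathbf{1}_A\bigl(G_1^{-1}(u),(G_2')^{-1}(v)\bigr)
\end{equation*}
pointwise on $[0,1]^2$. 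Integrating both sides against $dC$ and invoking Proposition~\ref{eta_cfg} twice yields $\eta(C,G_1,G_2)\le\eta(C,G_1,G_2')$.

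The second implication is entirely analogous: $G_1\preceq_{st}G_1'$ gives $G_1^{-1}(u)\le (G_1')^{-1}(u)$ for all $u$, so enlarging the first argument can only make the event $\{x_1\le x_2\}$ harder to satisfy, and the indicator decreases pointwise; integrating against $dC$ reverses nothing and gives $\eta(C,G_1,G_2)\ge\eta(C,G_1',G_2)$. The only genuinely delicate point—the main obstacle, such as it is—is the equivalence between the stochastic ordering of distributions and the pointwise ordering of their quantile functions, together with the care needed at the endpoints and on the sets where a distribution function equals $0$ or $1$. The hypothesis $G_1,G_2,G_1',G_2'\in\mathcal{G}$ is precisely what removes these difficulties, since it guarantees continuous, strictly increasing behavior on the relevant domain and hence well-defined inverses; so what might otherwise be a measure-theoretic nuisance becomes a routine verification.
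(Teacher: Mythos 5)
Your proposal is correct and follows essentially the same route as the paper: both translate $G_2\preceq_{st}G_2'$ into the pointwise ordering of inverses $G_2^{-1}(v)\le (G_2')^{-1}(v)$, deduce the pointwise monotonicity of the indicator $\mathbf{1}_A$, and conclude by integrating against $dC$ via the representation \eqref{formulautile}. No substantive difference.
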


\begin{proof}
We prove only the first relation of Proposition \ref{eta_ineq}, since the
proof for the second one is analogous. By hypothesis, and since $G_1,G_2'\in\mathcal{G}$ for each $x\in(0,1)$,
one has
$$
G_2^{-1}(x)\leq G_2'^{-1}(x).
$$
Therefore
$$
(G_1^{-1}(x),G_2^{-1}(x))\in A\Rightarrow(G_1^{-1}(x),G_2'^{-1}(x))\in A\,.
$$
Hence, the proof can be concluded by recalling \eqref{formulautile}.
\end{proof}

\medskip

From Proposition \ref{eta_ineq}, in particular we get
\begin{equation}\label{Disug1}
\eta(C,G,G)\leq\eta(C,G',G)\qquad\mbox{ and }\qquad
\eta(C,G,G)\leq\eta(C,G,G'')\,,
\end{equation}
for any choice of $G,G',G''\in\mathcal{G}$ such that $G'\preceq_{st}G\preceq_{st}G''$.

\medskip

A basic fact in the analysis of the classes $\mathcal{L}_{\gamma}$ is that the
quantities of the form $\eta(C,G,G)$ only depend on the copula $C$. More
formally we state the following result.

\begin{proposition}\label{foranypair}
For any pair of distribution functions
$G^{\prime},G^{\prime\prime}\in\mathcal{G}$, one has
\begin{align}
\eta(C,G^{\prime},G^{\prime})=\eta(C,G^{\prime\prime},G^{\prime\prime})\,,\label{etaCGG}
%\xi(C,G^{\prime},G^{\prime}) =\xi(C,G^{\prime\prime},G^{\prime\prime})\,.\label{xiCGG}
\end{align}
\end{proposition}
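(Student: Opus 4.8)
The plan is to collapse both $\eta(C,G',G')$ and $\eta(C,G'',G'')$ to one and the same integral that visibly involves only $C$, by exploiting the monotonicity of the quantile function. First I would apply Proposition \ref{eta_cfg} with $G_1 = G_2 = G$ for an arbitrary $G \in \mathcal{G}$, giving
\[
\eta(C, G, G) = \int_{[0,1]^2} \mathbf{1}_A(G^{-1}(u), G^{-1}(v)) \, dC(u,v).
\]
The key observation is that the integrand does not actually depend on $G$. Indeed, since $G \in \mathcal{G}$ is continuous and strictly increasing on the set where its values lie in $(0,1)$, the inverse $G^{-1}$ is strictly increasing on $(0,1)$; hence for every $(u,v) \in (0,1)^2$ we have $G^{-1}(u) \le G^{-1}(v)$ if and only if $u \le v$ (the three cases $u<v$, $u=v$, $u>v$ all being preserved under $G^{-1}$). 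In view of the definition \eqref{set} of $A$, this means
\[
\mathbf{1}_A(G^{-1}(u), G^{-1}(v)) = \mathbf{1}_{\{u \le v\}} \qquad \text{for all } (u,v) \in (0,1)^2,
\]
an expression free of $G$.

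It then remains to remove the boundary of the square from the domain of integration. Because $C$ is a copula, its one-dimensional marginals are uniform on $[0,1]$; consequently each of the four edges $\{u=0\}$, $\{u=1\}$, $\{v=0\}$, $\{v=1\}$ is $C$-null, and so is their union. Thus the integral over $[0,1]^2$ equals the integral over $(0,1)^2$, and we arrive at
\[
\eta(C, G, G) = \int_{(0,1)^2} \mathbf{1}_{\{u \le v\}} \, dC(u,v),
\]
a quantity determined by $C$ alone. Evaluating this identity at $G = G'$ and at $G = G''$ yields \eqref{etaCGG}.

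Once the equivalence $G^{-1}(u) \le G^{-1}(v) \Leftrightarrow u \le v$ is in place, the argument is little more than bookkeeping, so I do not expect a serious difficulty. The only point deserving care is measure-theoretic: one must verify that the part of the domain where $G^{-1}$ is not genuinely defined, namely the boundary of $[0,1]^2$, carries no $C$-mass, which is precisely what the uniform-marginal property of copulas provides. Everything else is forced by membership in $\mathcal{G}$.
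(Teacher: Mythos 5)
Your proof is correct and follows essentially the same route as the paper: apply Proposition \ref{eta_cfg} with $G_1=G_2=G$ and observe that, since $G^{-1}$ is increasing, the integrand $\mathbf{1}_A(G^{-1}(u),G^{-1}(v))$ reduces to $\mathbf{1}_{\{u\le v\}}$, which is free of $G$. The only difference is that you additionally justify discarding the boundary of $[0,1]^2$ via the uniform marginals of $C$, a measure-theoretic detail the paper passes over silently.
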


\begin{proof}
Recalling \eqref{formulautile} one obtains
\begin{equation}\label{2formulautile}
\int_{\lbrack0,1]^{2}}\mathbf{1}_{A}(G^{\prime-1}(u),G^{\prime-1}(v))\,dC(u,v)=
\int_{[0,1]^{2}}\mathbf{1}_{A}(G^{\prime\prime-1}(u),G^{\prime\prime-1}(v))\,dC(u,v)
\end{equation}
because $\mathbf{1}_{A}(G^{\prime-1}(u),G^{\prime-1}(v))=
\mathbf{1}_{A}(G^{\prime\prime-1}(u),G^{\prime\prime-1}(v))=\mathbf{1}_{A}(u,v)$,
so equality in \eqref{etaCGG} is proved.
\end{proof}

\medskip

As a consequence of Proposition \ref{foranypair} we can introduce the symbol
\begin{align}\label{etaC}
    \eta(C):=\eta(C,G,G),%\\
%    \xi(C):=\xi(C,G,G),\label{xiC}
\end{align}
and, by letting $G_1=G_2=G$ in \eqref{formulautile}, write
\begin{equation}\label{etapuro}
\eta(C)=\int_{A\cap[0,1]^2}dC
\end{equation}
for $G\in\mathcal{G}$. From Proposition \ref{eta_ineq} and from the inequalities \eqref{Disug1}, we obtain

\begin{proposition}\label{etainf}
For $G_1,G_2\in\mathcal{G}$ the following implication holds
$$
G_1\preceq_{st}G_2\Rightarrow\eta(C)\leq\eta(C,G_1,G_2).
$$
\end{proposition}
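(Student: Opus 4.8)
The plan is to reduce the claim directly to the monotonicity statements already established, so that essentially no new computation is needed. The key observation is that the left-hand side $\eta(C)$ can be rewritten, via \eqref{etaC} and Proposition \ref{foranypair}, as $\eta(C,G_1,G_1)$: indeed $\eta(C)$ was defined precisely so that $\eta(C)=\eta(C,G,G)$ for every $G\in\mathcal{G}$, and Proposition \ref{foranypair} guarantees that this value does not depend on the chosen $G$. Thus the inequality to be proved, $\eta(C)\leq\eta(C,G_1,G_2)$, becomes $\eta(C,G_1,G_1)\leq\eta(C,G_1,G_2)$, an inequality between two quantities sharing the same first marginal $G_1$ and differing only in the second.

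First I would invoke the first implication of Proposition \ref{eta_ineq}, which asserts that, for a fixed first argument, the map $G_2\mapsto\eta(C,G_1,G_2)$ is monotone with respect to $\preceq_{st}$: if $G_2\preceq_{st}G_2'$ then $\eta(C,G_1,G_2)\leq\eta(C,G_1,G_2')$. Applying this with the second argument moving from $G_1$ up to $G_2$, which is legitimate precisely because the hypothesis is $G_1\preceq_{st}G_2$, yields $\eta(C,G_1,G_1)\leq\eta(C,G_1,G_2)$. Combining this with the rewriting of the previous paragraph gives exactly $\eta(C)\leq\eta(C,G_1,G_2)$. Equivalently, one may read off the conclusion directly from the second inequality in \eqref{Disug1} upon taking $G=G_1$ and $G''=G_2$.

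There is essentially no hard step here: the entire content has already been packaged into Propositions \ref{foranypair} and \ref{eta_ineq}. If any difficulty were to surface, it would be purely a matter of bookkeeping, namely checking that the hypothesis $G_1\preceq_{st}G_2$ is matched to the correct slot of Proposition \ref{eta_ineq}, i.e.\ that the first argument is held fixed at $G_1$ while only the second is moved; but this is immediate. As a sanity check one could instead fix the second argument at $G_2$ and use the second implication of Proposition \ref{eta_ineq} together with $\eta(C,G_2,G_2)=\eta(C)$, reaching the same conclusion by a symmetric route.
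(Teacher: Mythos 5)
Your proof is correct and follows exactly the route the paper intends: the statement is presented there as an immediate consequence of Proposition \ref{eta_ineq} and the inequalities \eqref{Disug1}, which is precisely your argument of rewriting $\eta(C)=\eta(C,G_1,G_1)$ via Proposition \ref{foranypair} and then moving the second argument up to $G_2$. Nothing is missing.
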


\medskip

We then see that the quantity $\eta(C)$ characterizes the
class $\mathcal{L}_{\gamma},\,0\leq\gamma\leq1$, in fact we can state the following

\begin{theorem}\label{Lgamma}
$C\in\mathcal{L}_{\gamma}$ if and only if $\,\eta(C)\geq\gamma$.
\end{theorem}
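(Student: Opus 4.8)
The plan is to prove the two implications separately, observing that essentially all the work has already been done in the preceding propositions, so that the theorem is really a packaging of Proposition \ref{etainf} together with the definition of $\eta(C)$ via Proposition \ref{foranypair}.

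For the \emph{if} direction, I would assume $\eta(C)\geq\gamma$ and take an arbitrary pair $G_1,G_2\in\mathcal{G}$ with $G_1\preceq_{st}G_2$. Proposition \ref{etainf} gives immediately $\eta(C,G_1,G_2)\geq\eta(C)\geq\gamma$. Since $G_1,G_2$ were an arbitrary $\preceq_{st}$-ordered pair, condition \eqref{BasicImplication} of Definition \ref{def2} holds for every admissible pair, which is exactly the statement $C\in\mathcal{L}_{\gamma}$.

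For the \emph{only if} direction, I would assume $C\in\mathcal{L}_{\gamma}$ and exploit the fact that the stochastic order is reflexive: for any fixed $G\in\mathcal{G}$ one has $G\preceq_{st}G$. Hence the pair $G_1=G_2=G$ is an admissible pair in Definition \ref{def2}, so \eqref{BasicImplication} yields $\eta(C,G,G)\geq\gamma$. By the definition \eqref{etaC} of $\eta(C)$ — which is legitimate precisely because Proposition \ref{foranypair} shows $\eta(C,G,G)$ is independent of the choice of $G$ — this reads $\eta(C)\geq\gamma$, completing the argument.

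I do not expect any genuine obstacle here: the substantive content lies in the earlier results, namely that $\eta(C,G,G)$ depends only on $C$ (Proposition \ref{foranypair}, giving a well-defined $\eta(C)$) and that $\eta(C)$ is the infimum of $\eta(C,G_1,G_2)$ over $\preceq_{st}$-ordered pairs (Proposition \ref{etainf}). The only point worth flagging explicitly in the write-up is the use of reflexivity of $\preceq_{st}$ to make the diagonal pair $(G,G)$ eligible in Definition \ref{def2}; this is what forces $\eta(C)$ itself to be bounded below by $\gamma$ and shows the bound in Proposition \ref{etainf} is attained, so that no strict gap can hide between the class membership and the scalar condition.
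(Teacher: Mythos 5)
Your proof is correct and is exactly the argument the paper intends (the paper omits the proof as an immediate consequence of Propositions \ref{foranypair} and \ref{etainf}): the \emph{if} direction is Proposition \ref{etainf}, and the \emph{only if} direction follows from reflexivity of $\preceq_{st}$ applied to the diagonal pair $(G,G)$, which is admissible in Definition \ref{def2} and gives $\eta(C)=\eta(C,G,G)\geq\gamma$. No gaps.
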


\medskip

We thus have
\begin{equation}\label{LgammaC}
\mathcal{L}_\gamma =\{C\in\mathcal{C}:\eta(C)\geq\gamma\}
\end{equation}
and we can also write
\begin{equation}\label{Lgamma2}
\eta(C)=\inf_{G_1,G_2\in \mathcal{G}}\{\eta(C,G_1,G_2):G_1\preceq_{st}G_2\}.
\end{equation}
In other words the infimum in formula \eqref{Lgamma2} is a minimum and it is attained when $G_1=G_2$.
We notice furthermore that the definition of $\eta(C,G_1,G_2)$ can be extended to the case when
$G_1,G_2\in D(\mathbb{R})$, the space of distribution functions on $\mathbb{R}$. The class
$\mathcal{G}$ has however a special role in the present setting, as it is shown in the following result.

\begin{theorem}
\label{continuo} Let $C\in\mathcal{C} $, $G,H\in D(\mathbb{R})$
with $G\preceq_{st} H$, then $\eta(C,G,H)\geq \eta(C)$.
\end{theorem}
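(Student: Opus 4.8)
The plan is to represent $\eta(C,G,H)$ through the standard quantile coupling of the copula $C$ and then to exhibit a single pointwise event inclusion that makes the desired inequality immediate, with no need to approximate $G,H$ by elements of $\mathcal{G}$.

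First I would fix a pair $(U,V)$ of random variables, uniform on $(0,1)$, whose joint law is $C$, and work with the generalized inverses
$$G^{-1}(u)=\inf\{x\in\mathbb{R}:G(x)\ge u\},\qquad H^{-1}(u)=\inf\{x\in\mathbb{R}:H(x)\ge u\}.$$
The basic identity $\{G^{-1}(u)\le x\}=\{u\le G(x)\}$, valid for every $G\in D(\mathbb{R})$, shows that $X_1:=G^{-1}(U)$ and $X_2:=H^{-1}(V)$ have marginals $G,H$ and joint distribution function $C(G(x_1),H(x_2))=F(x_1,x_2)$; hence this pair realizes the connecting copula $C$ even when $G,H$ possess atoms or flat stretches. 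Consequently the extended quantity $\eta(C,G,H)=\int_A dF$ admits the representation
$$\eta(C,G,H)=\mathbb{P}\big(G^{-1}(U)\le H^{-1}(V)\big).$$
Taking $G=H$ equal to a continuous strictly increasing distribution in $\mathcal{G}$, for which $G^{-1}$ is a genuine strictly increasing inverse, the same formula gives $\eta(C)=\mathbb{P}(U\le V)$, in accordance with \eqref{etapuro}.

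Next I would translate the hypothesis. Since $G\preceq_{st}H$ means $G(t)\ge H(t)$ for all $t$, the set $\{x:H(x)\ge u\}$ is contained in $\{x:G(x)\ge u\}$ for each $u$, whence
$$G^{-1}(u)\le H^{-1}(u)\qquad\text{for all }u\in(0,1).$$
The core of the argument is then a single pointwise inclusion: on the event $\{U\le V\}$, monotonicity of the nondecreasing map $G^{-1}$ yields $G^{-1}(U)\le G^{-1}(V)$, and the quantile inequality above gives $G^{-1}(V)\le H^{-1}(V)$; combining the two,
$$\{U\le V\}\subseteq\{G^{-1}(U)\le H^{-1}(V)\}.$$
Integrating the associated indicators against $dC$ and using the two representations of the previous step gives $\eta(C)=\mathbb{P}(U\le V)\le\mathbb{P}(G^{-1}(U)\le H^{-1}(V))=\eta(C,G,H)$, which is the claim.

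The only delicate point, and the step I expect to require the most care, is the first one: checking that the generalized-inverse coupling really produces the joint law $F(x_1,x_2)=C(G(x_1),H(x_2))$ when $G$ or $H$ fail to be continuous or strictly increasing, so that the passage from the definition $\int_A dF$ to $\mathbb{P}(G^{-1}(U)\le H^{-1}(V))$ is legitimate and uses only the prescribed $C$ (here the connecting copula need no longer be unique). I regard this coupling route as preferable to approximating $G,H$ from within $\mathcal{G}$ and invoking Proposition \ref{etainf}: such an approximation would force one to pass to the limit in $\mathbb{P}(X_1\le X_2)$, i.e. in the $C$-measure of the set $A$ whose boundary is the diagonal $\{x_1=x_2\}$, and weak convergence does not control the mass that the limiting model may place on that diagonal. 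The pointwise inclusion above bypasses this obstruction entirely.
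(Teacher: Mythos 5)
Your proof is correct, and it takes a genuinely different route from the paper's. The paper approximates $G,H$ by sequences $G_n,H_n\in\mathcal{G}$, invokes Sempi's theorem to get $C(G_n,\tilde H_n)\stackrel{w}{\to}C(G,H)$ after the replacement $\tilde H_n=\min\{G_n,H_n\}$ (which restores $G_n\preceq_{st}\tilde H_n$ inside $\mathcal{G}$), and then applies the closed-set half of the Portmanteau theorem to the closed set $A$, combining $\eta(C)\leq\int_A d\tilde F_n$ (Proposition \ref{etainf}) with $\limsup_n\int_A d\tilde F_n\leq\int_A dF$. Your argument replaces all of this by the quantile coupling $X_1=G^{-1}(U)$, $X_2=H^{-1}(V)$ with $(U,V)\sim C$, the standard identity $\{G^{-1}(u)\leq x\}=\{u\leq G(x)\}$ to check that the coupling has joint distribution function $C(G(x_1),H(x_2))$, and the pointwise inclusion $\{U\leq V\}\subseteq\{G^{-1}(U)\leq H^{-1}(V)\}$ coming from $G^{-1}(u)\leq H^{-1}(u)$. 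This is more elementary and self-contained: it needs no convergence theorem for copulas and no weak-convergence machinery, and it exposes the mechanism of the inequality (monotonicity of generalized inverses) rather than obtaining it as a limit. One remark on your closing comment: the boundary-mass issue you raise against the approximation route is not actually an obstruction, because $A$ is closed and the Portmanteau inequality $\limsup_n\mu_n(A)\leq\mu(A)$ goes in exactly the direction needed --- any extra mass the limit places on the diagonal only strengthens the desired lower bound on $\eta(C,G,H)$. So both proofs are sound; yours is simply shorter and more transparent, while the paper's has the side benefit of displaying the weak-continuity structure that underlies the extension from $\mathcal{G}$ to $D(\mathbb{R})$.
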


\begin{proof}
Consider two  sequences $(G_n: n \in\mathbb{N})$, $(H_n: n\in\mathbb{N})$ such that
$G_n, H_n\in \mathcal{G}$ and $ G_n\stackrel{w}{\to}G $, $H_n \stackrel{w}{\to}H $. Applying the
Theorem 2 in \cite{Sempi2004}, we obtain that $C(G_n ,H_n)
\stackrel{w}{\to} C(G,H) $.

Take the new sequence $(\tilde H_n: n \in\mathbb{N}) $ where
$\tilde H_n (x) := \min\{G_n(x), H_n(x)\} $. We notice
that $\tilde H_n \in\mathcal{G}$, moreover $G_n \preceq_{st}\tilde H_n$ and
$\tilde H_n \stackrel{w}{\to} H$.
This implies $C(G_n, \tilde H_n)\stackrel{w}{\to} C(G,H)$.

Now using the standard characterization of weak convergence on
separable spaces (see \cite{billingsley2009convergence} p. 67
Theorem 6.3)
\[
\limsup_{n\to\infty}\int_{B}d\tilde F_n\leq\int_{B}dF\,,
\]
for any closed set $B\in\mathbb{R}^2$, where $F=C(G,H)$ and $\tilde F_n=C(G_n, \tilde H_n)$.
Taking the closed set $A$ defined in \eqref{set} one has
\begin{equation}\label{limsup}
\eta(C)\leq\limsup_{n\to\infty}\int_A d\tilde F_n\leq\int_A dF=\eta(C,G,H).
\end{equation}
\end{proof}

\begin{remark}
Theorem \ref{continuo} shows that the minimum of $\eta(C,G,H)$, for
$G,H\in D(\mathbb{R})$, is attained at $(C,G,G)$, for any
$G\in\mathcal{G}\subset D(\mathbb{R})$. This result allows us to
replace  the class $\mathcal{G}$ with $D(\mathbb{R})$ in the
expression of $\mathcal{L}_\gamma$ given in \eqref{Lgamma2}.
We notice furthermore that one can have $\eta(C,G',G')\neq\eta(C,G'',G'')$
when $G',G''$ are in $D(\mathbb{R})$.
\end{remark}

\bigskip

Concerning the classes $\mathcal{L}_\gamma$, we also define
\begin{equation}
\mathcal{B}_{\gamma}:=\{C\in\mathcal{C}\,|\,\eta(C)=\gamma\},
\end{equation}
so that
$$
\mathcal{L}_{\gamma}=\bigcup_{\gamma'\geq\gamma}\mathcal{B}_{\gamma'}.
$$

\medskip

We now show that the classes $\mathcal{B}_{\gamma}$, $\gamma \in
[0,1]$, are all non empty. Several natural examples might be
produced on this purpose. We fix attention on
a simple example built in terms of the random variables $X_1$,
$X_2^{(\gamma)}$ defined as follows.
On the probability space $([0,1],\mathcal{B}[0,1],\lambda)$, where
$\lambda$ denotes the Lebesgue measure, we take $X_1(\omega)=\omega$, and
\begin{equation}\label{Shuffle}
X_2^{(\gamma)}(\omega)= \left\{
\begin{array}[c]{ll}
\omega+1-\gamma & \text{ if }\omega\in[0,\gamma],\\
\omega-\gamma & \text{ if }\omega\in(\gamma,1].
\end{array}
\right.
\end{equation}

As it happens for $X_1$, also the distribution of $X_2^{(\gamma)} $ is uniform in $[0,1]$ for any $\gamma\in[0,1]$
and the connecting copula of $X_1,X_2^{(\gamma)}$, that is then uniquely determined, will be denoted by $C_\gamma$.

\begin{proposition}\label{BCgamma}
For any $\gamma\in(0,1]$, one has
\begin{itemize}
\item[(i)]
$C_\gamma\in\mathcal{B}_\gamma$.
\item[(ii)]
$C_{\gamma}(u,v)=\min\{u,v,\max\{u-\gamma,0\}+\max\{v+\gamma-1,0\}\}$.
\label{copulaCgamma1}
\end{itemize}
\end{proposition}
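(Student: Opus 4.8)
The plan is to exploit the fact that both $X_1$ and $X_2^{(\gamma)}$ are uniformly distributed on $[0,1]$, so their common marginal $G$ is the identity on $[0,1]$; by uniqueness of the connecting copula for continuous marginals, $C_\gamma$ is just the joint distribution function of the pair restricted to the unit square, namely $C_\gamma(u,v)=\mathbb{P}(X_1\leq u,\,X_2^{(\gamma)}\leq v)$ for $(u,v)\in[0,1]^2$. Both assertions then reduce to computing Lebesgue measures of events described explicitly through \eqref{Shuffle}, and throughout I would treat the two branches $\omega\in[0,\gamma]$ and $\omega\in(\gamma,1]$ separately.

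For part (i), since the two marginals coincide I would use \eqref{etaC} together with \eqref{eta} to write $\eta(C_\gamma)=\eta(C_\gamma,G,G)=\mathbb{P}(X_1\leq X_2^{(\gamma)})$. Reading off \eqref{Shuffle}, on $[0,\gamma]$ one has $X_2^{(\gamma)}-X_1=1-\gamma\geq 0$, so the event $\{X_1\leq X_2^{(\gamma)}\}$ holds there, whereas on $(\gamma,1]$ one has $X_2^{(\gamma)}-X_1=-\gamma<0$, so it fails. Hence $\{X_1\leq X_2^{(\gamma)}\}$ coincides, up to the $\lambda$-null point $\omega=\gamma$, with $[0,\gamma]$, giving $\mathbb{P}(X_1\leq X_2^{(\gamma)})=\lambda([0,\gamma])=\gamma$. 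Thus $\eta(C_\gamma)=\gamma$ and $C_\gamma\in\mathcal{B}_\gamma$.

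For part (ii), I would fix $(u,v)\in[0,1]^2$ and compute $C_\gamma(u,v)=\lambda\big(\{\omega\leq u\}\cap\{X_2^{(\gamma)}(\omega)\leq v\}\big)$. Inverting \eqref{Shuffle} on each branch shows that $\{X_2^{(\gamma)}\leq v\}$ is the disjoint union $[0,\max\{v+\gamma-1,0\}]\cup(\gamma,\,\gamma+\min\{v,1-\gamma\}]$, the first piece lying in $[0,\gamma]$ and the second in $(\gamma,1]$. Intersecting with $[0,u]$ and adding the lengths of the two resulting intervals yields the intermediate form $C_\gamma(u,v)=\min\{u,\max\{v+\gamma-1,0\}\}+\max\{0,\min\{u-\gamma,\,v,\,1-\gamma\}\}$.

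The remaining and main step is to verify that this sum equals the compact expression $\min\{u,v,\max\{u-\gamma,0\}+\max\{v+\gamma-1,0\}\}$ claimed in (ii). I would organize the check by the four sign regimes of $u-\gamma$ and $v+\gamma-1$, collapsing each using $\gamma\in(0,1]$ and $u,v\in[0,1]$. In three of the regimes at least one of the two pieces is empty and the identity is immediate. The only genuinely delicate case is $u>\gamma$ together with $v>1-\gamma$, where both pieces contribute; there I would note that $v>1-\gamma$ forces $v+\gamma>1\geq u$, hence $u-\gamma<v$, so the second clipped length equals $u-\gamma$ and the total collapses to $u+v-1=\min\{u,v,u+v-1\}$. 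Apart from carefully tracking the clipping at $\gamma$ and at $1$, no essential difficulty arises, so the argument is a finite, routine case analysis.
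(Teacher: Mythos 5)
Your proposal is correct and follows essentially the same route as the paper: part (i) by splitting the event $\{X_1\leq X_2^{(\gamma)}\}$ over the two branches of \eqref{Shuffle} to get probability $\gamma$, and part (ii) by computing the joint distribution function of $(X_1,X_2^{(\gamma)})$ as a Lebesgue measure and identifying it with the copula since both marginals are uniform. The only difference is cosmetic — the paper's intermediate expression $\max\{\min\{x_1,x_2+\gamma-1,\gamma\},0\}+\max\{\min\{x_1,x_2+\gamma\}-\gamma,0\}$ coincides with your clipped-interval form — so no further comment is needed.
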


\begin{proof}

(i) First we notice that ${\mathbb{P}}(X_1\leq X_2^{(\gamma)})=\gamma$. In fact
$$
\mathbb{P}(X_1\leq X_2^{(\gamma)})={\mathbb{P}}(X_1\leq X_1+1-\gamma,X_1\leq \gamma)+
\mathbb{P}(X_1\leq X_1-\gamma,X_1>\gamma)=\gamma.
$$
Whence, $\eta(C_{\gamma})=\mathbb{P}(X_1\leq X_2^{(\gamma)})=\gamma$, since
both the distributions of $X_1,X_2^{(\gamma)}$ belong to $\mathcal{G}$.

(ii) For $x_1,x_2\in [0,1]$ we can write
\begin{align*}
F_{X_1,X_2^{(\gamma)}}(x_1,x_2)
& :=\mathbb{P}(X_1\leq x_1,X_2^{(\gamma)}\leq x_2)\\
& =\mathbb{P}(X_1\leq x_1,X_1+1-\gamma\leq x_2,\,X_1\leq\gamma)\\
& \,+\mathbb{P}(X_1\leq x_1,X_1\leq x_2+\gamma,\,X_1>\gamma)\\
& =\mathbb{P}(X_1\leq\min\{x_1,x_2+\gamma-1,\gamma\})+\mathbb{P}(\gamma<X_1\leq\min\{x_1,x_2+\gamma\})\\
& =\max\{\min\{x_1,x_2+\gamma-1,\gamma\},0\}+\max\{\min\{x_1,x_2+\gamma\}-\gamma,0\}\\
& =\min\{x_1,x_2,\max\{x_1-\gamma,0\}+\max\{x_2+\gamma-1,0\}\}\,.
\end{align*}
Since both the marginal distributions of $X_1$ and $X_2^{(\gamma)}$
are uniform, it follows that
$$
C_{\gamma}(u,v)=\min\{u,v,\max\{u-\gamma,0\}+\max\{v+\gamma-1,0\}\}.
$$
\end{proof}

The copulas $C_\gamma$ have also been considered for different purposes in the literature, see e.g.
\cite{Nelsen2007} and \cite{SibSto2008}.
We point out that the identity $\eta(C_\gamma)=\gamma$ (for $\gamma\in(0,1]$) could also have been obtained
directly from formula \eqref{etapuro}. In this special case the computation of $\mP(X_1\leq X_2)$ is
however straightforward.

As an immediate consequence of Proposition \ref{BCgamma} we have that $\mathcal{L}_{\gamma'}$
is strictly contained in $\mathcal{L}_{\gamma}$ for any $0\leq\gamma<\gamma^{\prime}\leq1$.
We notice furthermore that $\mathcal{L}_0=\mathcal{C}$ and
$\mathcal{L}_1=\{C\in\mathcal{C} : \int_{A\cap [0,1]^2}dC=1\}\neq \emptyset$.

%For the special case $\gamma=1/3$, the copula $C_\gamma$ had been introduced in \cite{Nelsen2007}
%and was obtained as a \textit{shuffle} of $M$ (see \cite{nelsen2006introduction}), with $M$ denoting
%the \emph{maximal copula} $M(u,v):=\min\{u,v\}$.
%The copula $C_\gamma$ has also been studied in \cite{DurKleSemFlo2010}, for $\gamma$
%spanning the interval $[0,1/3]$, (for a related argument see also \cite{klement2006non}).
%As was noticed in \cite{SibSto2008}, the Definition of $C_\gamma$ can be actually considered for any
%$\gamma\in[0,1]$ and such extension has a specific interest in our setting.

Graphs of $C_\gamma$ for different values of $\gamma$ are provided in Figure \ref{fig:Cgamma}.

\begin{figure}[ht]
\centering
  \includegraphics[width=0.3\textwidth,keepaspectratio]{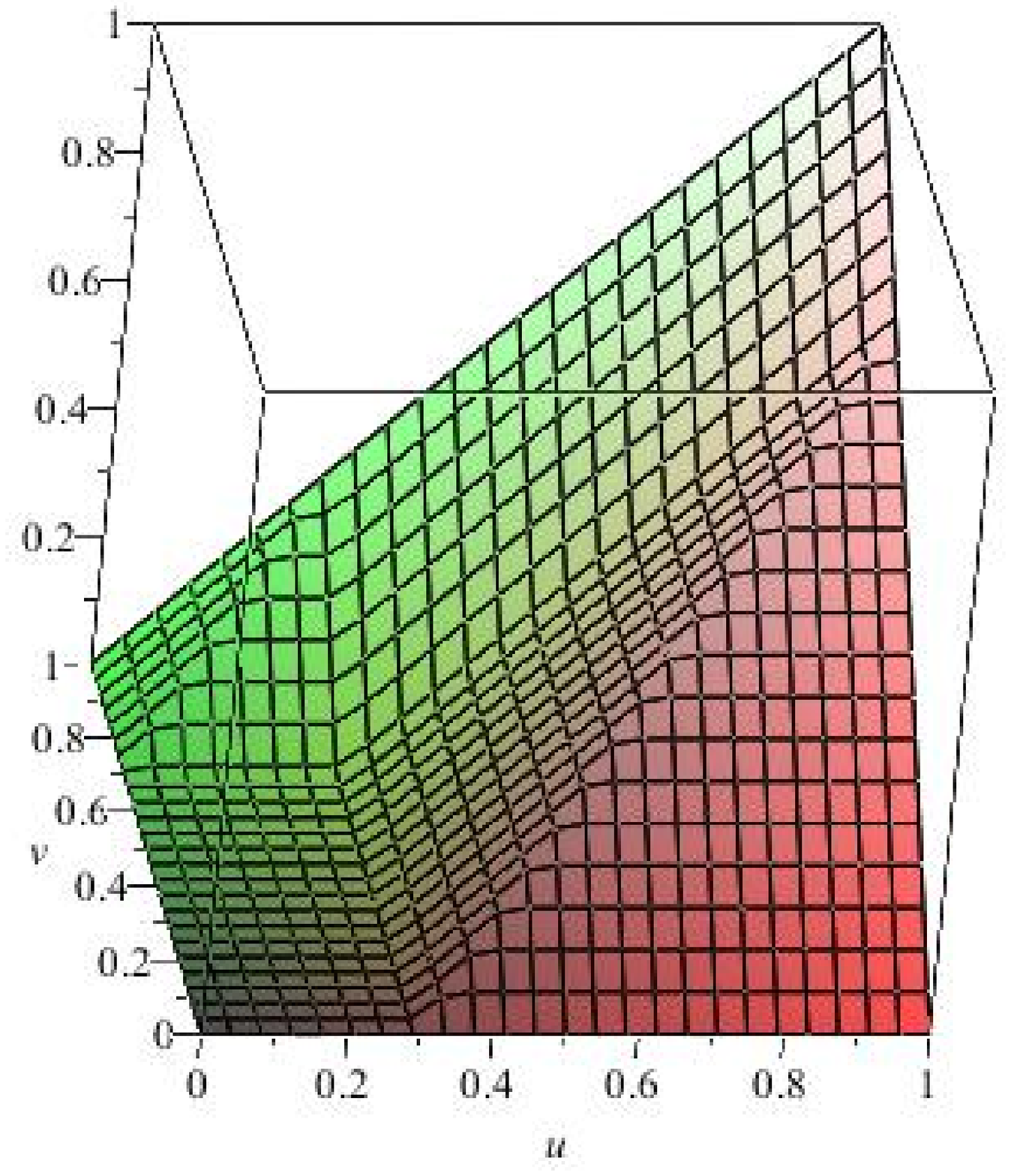}
  \hspace{1mm}
  \includegraphics[width=0.3\textwidth,keepaspectratio]{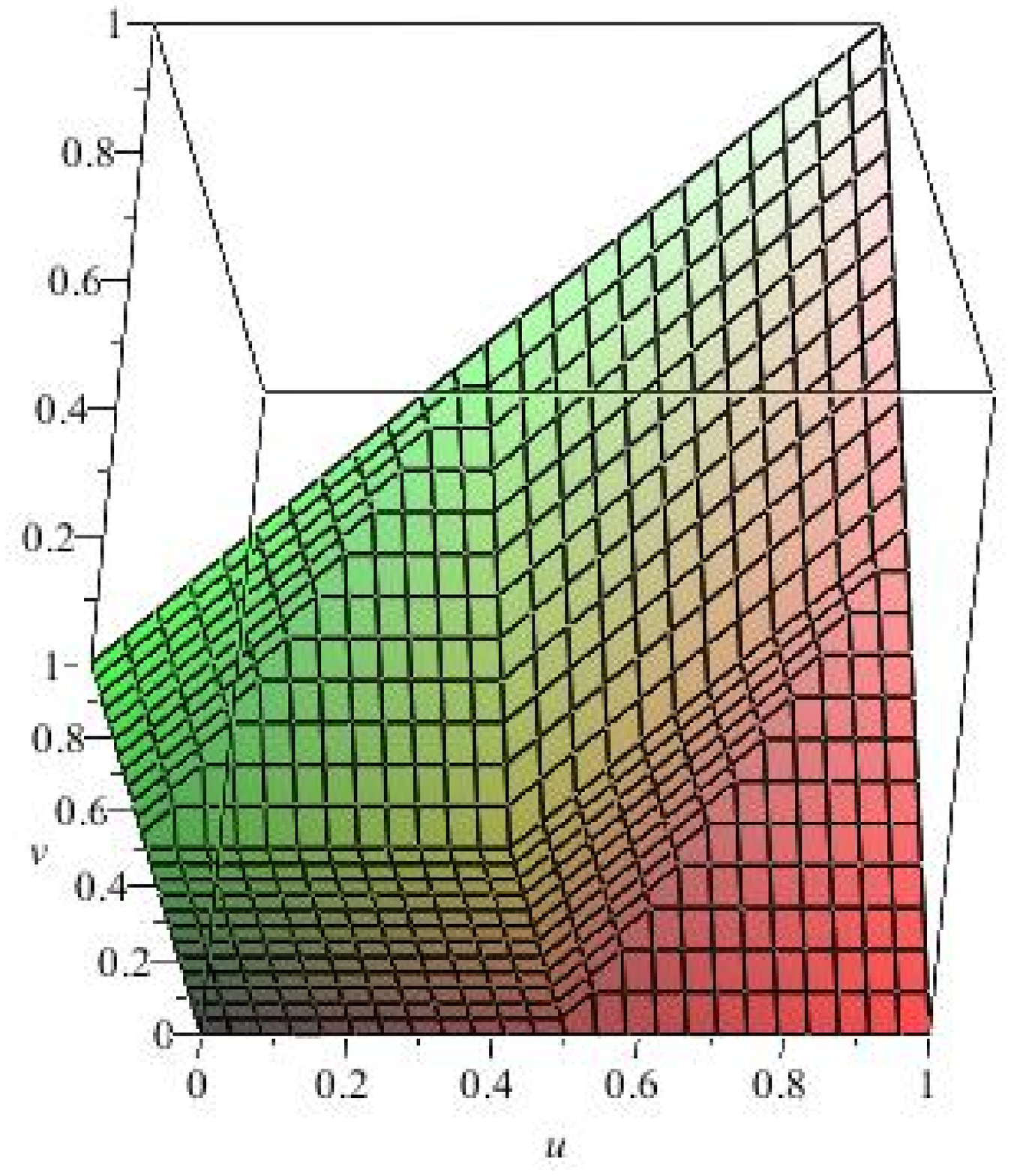}
  \hspace{1mm}
  \includegraphics[width=0.3\textwidth,keepaspectratio]{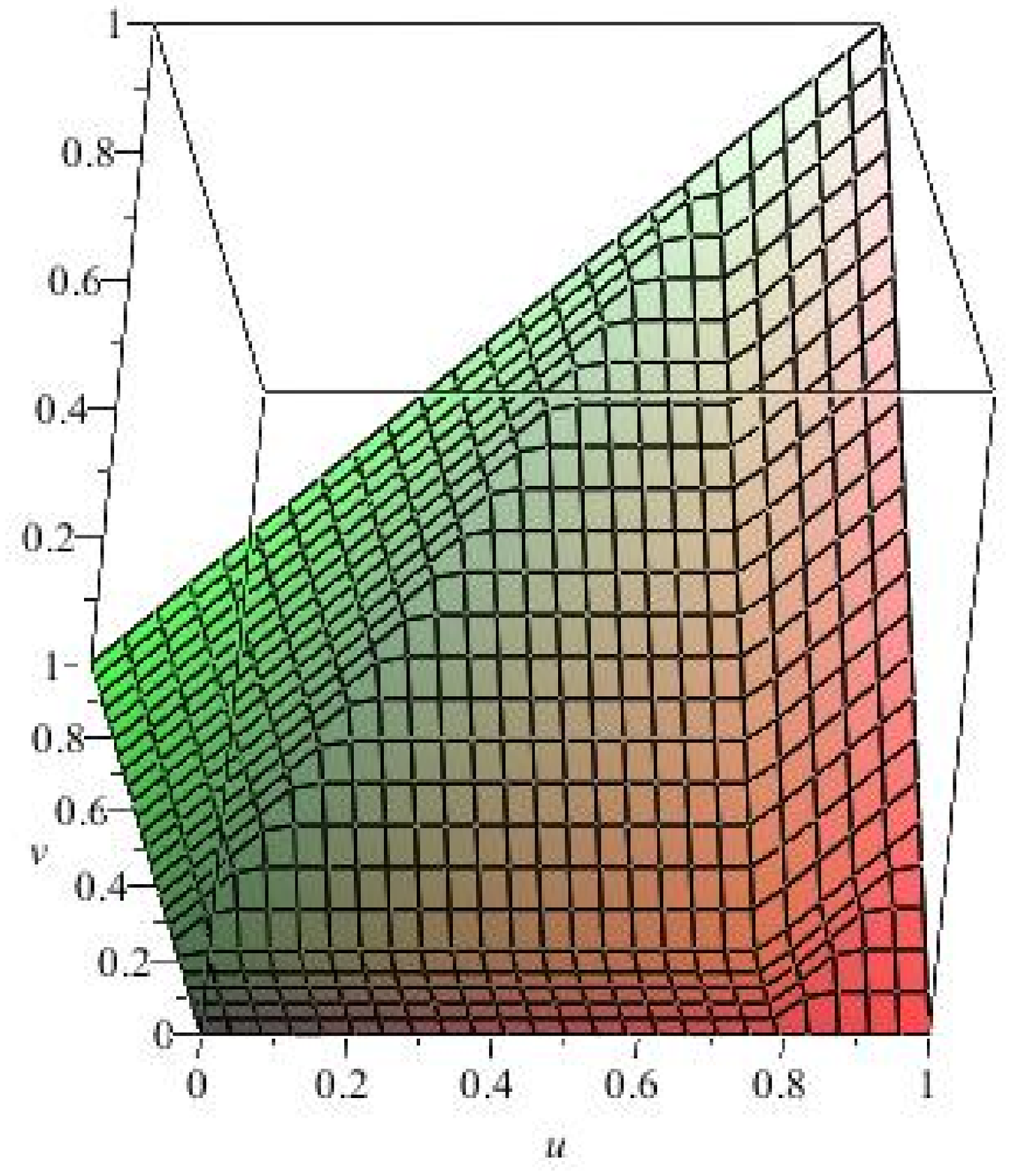}
  \caption{\small{Copulas from the family $C_\gamma$ with $\gamma = 0.3,\, 0.5,\, 0.8$ respectively}}
  \label{fig:Cgamma}
\end{figure}

\section{Further properties of $\mathcal{L}_\gamma$ and examples}\label{sec3}

We start this Section by analyzing further properties of the classes
$\mathcal{L}_{\gamma}$ that can also shed light on the relations between
stochastic precedence and stochastic orderings.
First we notice that the previous Definition \ref{def2} has been formulated
in terms of the usual stochastic ordering $\preceq_{st}$. However
similar results can also be obtained for other important
concepts of stochastic ordering that have been considered
in the literature (such as the \textit{hazard rate}, the
\textit{likelihood ratio}, and the \textit{mean residual life}
orderings, see \cite{ShSh}).

Let us fix, in fact, a stochastic ordering $\preceq_{\ast}$ different
from $\preceq_{st}$. Definition \ref{def2} can be modified by
replacing therein $\preceq_{st}$ with $\preceq_{\ast}$ and this
operation leads us to a new class of copulas that we can denote by
$\mathcal{L}_{\gamma}^{(\ast)}$. More precisely we set
\begin{equation}\label{LgammaCstar}
\mathcal{L}^{(*)}_\gamma:=\{C\in\mathcal{C}:\eta(C,G_1,G_2)\geq\gamma,\;
\forall \,G_1,G_2\in\mathcal{G}\; s.t. \;G_1\preceq_{*}G_2\}
\end{equation}
or equivalently
\begin{equation}\label{LgammaCbis}
\mathcal{L}^{(*)}_\gamma =\{C\in\mathcal{C}:\eta^*(C)\geq\gamma\}
\end{equation}
where
\begin{equation}\label{Lgamma2bis}
\eta^*(C):=\inf_{G_1,G_2\in\mathcal{G}}\{\eta(C,G_1,G_2):G_1\preceq_{\ast}G_2\}.
\end{equation}
For given $\gamma\in(0,1)$, one
might wonder about possible relations between
$\mathcal{L}_{\gamma}^{(\ast)}$ and $\mathcal{L}_{\gamma}$. Actually
one has the following result, which will be formulated for binary relations (not necessarily
stochastic orderings) over the space $D(\mR)$.

\begin{proposition} \label{altri}
Let $\preceq_{\ast}$ be a relation satisfying
\begin{itemize}
    \item[(a)] for any $G\in D(\mathbb{R})$ one has $G\preceq_{\ast} G$;
    \item[(b)] for any $G_1,G_2\in D(\mathbb{R})$ with $G_1\preceq_{\ast}G_2$ one has $G_1\preceq_{st}G_2 $.
\end{itemize}
Then $\mathcal{L}_{\gamma}=\mathcal{L}_{\gamma}^{(\ast)}$.
\end{proposition}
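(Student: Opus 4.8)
The plan is to prove the stronger statement that $\eta^{*}(C)=\eta(C)$ for every $C\in\mathcal{C}$. In view of the characterizations \eqref{LgammaC} and \eqref{LgammaCbis}, namely $\mathcal{L}_{\gamma}=\{C:\eta(C)\geq\gamma\}$ and $\mathcal{L}_{\gamma}^{(*)}=\{C:\eta^{*}(C)\geq\gamma\}$, this equality of the two functionals immediately yields $\mathcal{L}_{\gamma}=\mathcal{L}_{\gamma}^{(*)}$ for every $\gamma$. I would establish $\eta^{*}(C)=\eta(C)$ by proving the two inequalities separately, each one using exactly one of the two hypotheses.

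For the inequality $\eta^{*}(C)\leq\eta(C)$ I would invoke the reflexivity hypothesis (a). Fix any $G\in\mathcal{G}$. By (a) the pair $(G,G)$ satisfies $G\preceq_{\ast}G$, hence it is among the admissible pairs in the infimum \eqref{Lgamma2bis} defining $\eta^{*}(C)$; consequently $\eta^{*}(C)\leq\eta(C,G,G)$. By Proposition \ref{foranypair} (equivalently, by the very definition \eqref{etaC}) one has $\eta(C,G,G)=\eta(C)$, and the desired bound follows. As a by-product, this shows that the admissible set in \eqref{Lgamma2bis} is nonempty, so that $\eta^{*}(C)$ is well defined.

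For the reverse inequality $\eta^{*}(C)\geq\eta(C)$ I would invoke the domination hypothesis (b). By (b), every pair $(G_1,G_2)$ with $G_1\preceq_{\ast}G_2$ also satisfies $G_1\preceq_{st}G_2$; therefore the set of pairs over which the infimum \eqref{Lgamma2bis} is taken is contained in the set of pairs appearing in the infimum \eqref{Lgamma2}. Since an infimum over a smaller set is no smaller, and since by \eqref{Lgamma2} the latter infimum equals $\eta(C)$, we obtain $\eta^{*}(C)\geq\eta(C)$. Combining the two inequalities gives $\eta^{*}(C)=\eta(C)$, and hence $\mathcal{L}_{\gamma}^{(*)}=\mathcal{L}_{\gamma}$.

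I would expect no genuine obstacle here: the argument reduces to comparing the constraint sets of two infima and invoking the already-established fact (Proposition \ref{foranypair} together with \eqref{Lgamma2}) that the diagonal pairs $(G,G)$ realize $\eta(C)$. The only point requiring a little care is to recognize the complementary roles of the two hypotheses: condition (a) is precisely what keeps a minimizing diagonal pair available in the infimum for $\preceq_{\ast}$, while condition (b) is precisely what prevents $\preceq_{\ast}$ from admitting extra pairs that could push $\eta^{*}(C)$ strictly below $\eta(C)$. It is worth emphasizing that transitivity, antisymmetry, or any further order-theoretic structure of $\preceq_{\ast}$ plays no role whatsoever, which is exactly why the statement can be formulated for arbitrary binary relations on $D(\mathbb{R})$ satisfying (a) and (b).
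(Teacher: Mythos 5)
Your proposal is correct and follows essentially the same route as the paper: hypothesis (b) shows the infimum defining $\eta^{*}(C)$ is taken over a subset of the pairs defining $\eta(C)$, giving $\eta^{*}(C)\geq\eta(C)$, while hypothesis (a) keeps the diagonal pairs $(G,G)$ admissible so that $\eta^{*}(C)\leq\eta(C,G,G)=\eta(C)$. The paper's proof is just a more compressed version of the same two-inequality argument.
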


\begin{proof}
In view of (b), one has that $\eta(C) \leq \eta^*(C)$. In fact both the quantities $\eta(C)$ and $\eta^*(C)$
are obtained as an infimum of the same functional and, compared with $\eta$, the quantity $\eta^*$ is an
infimum computed on a smaller set.

Due to (a), however, $\eta(C)$ and $\eta^*(C)$ are both obtained, in \eqref{Lgamma2} and \eqref{Lgamma2bis}
respectively, as minima attained on a same point $(G,G)$. We can then conclude that
$\mathcal{L}^{(*)}_\gamma =\mathcal{L}_\gamma$.
\end{proof}

Concerning Proposition \ref{altri} we notice that, for example, the hazard rate and the likelihood ratio orderings,
$\preceq_{hr}$ and $\preceq_{lr}$, both satisfy the conditions (a) and (b).

In applied problems it can be relevant to remark that imposing stochastic orderings stronger than $\preceq_{st}$
does not necessarily increase the level of stochastic precedence.

\bigskip

For the sake of notational simplicity we come back to considering the usual
stochastic ordering $\preceq_{st}$ and the class $\mathcal{L}_\gamma$.

For what follows it is now convenient also to consider the quantities $\xi(C,G_1,G_2)$ and $\xi(C)$ defined
as follows:
\begin{align}\label{xi}
\xi(C,G_1,G_2)&:=\mathbb{P}(X_1=X_2),\\ \label{xiC}
\xi(C)&:=\xi(C,G,G),
\end{align}
where $X_1$ and $X_2$ are random variables with distributions $G_1,G_2\in\mathcal{G}$
respectively and connecting copula $C$.

For a given bivariate model we have considered so far the quantities $\eta(C)$
with $C$ denoting the connecting copula. In what follows we point out the relations among
$\eta(C)$, $\eta(\widehat{C})$, $\eta(C^t)$ where $\widehat{C}$ and $C^t$
denote the \emph{survival copula} and the \emph{transposed copula}, respectively.
The transposed copula $C^t$ is defined by
\begin{equation}\label{Ctransposed}
C^t(u,v):=C(v,u)
\end{equation}
so that if $C$ is the connecting copula of the pair $(X_1,X_2)$, then $C^t$ is the
copula of the pair $(X_2,X_1)$. Whence, if $X_1$ and $X_2$ have the same distribution $G\in\mathcal{G}$, then
$$
\eta(C^t)=\mP(X_2\leq X_1).
$$

On the other hand the notion of survival copula of the pair $(X_1,X_2)$, which comes out as natural when
considering pairs of non-negative random variables, is defined by the equation
\begin{equation}\label{JointSurvivFnctConC}
\overline{F}_{X_1,X_2}(x_1,x_2)=\widehat{C}\left[
\overline{G}_1(x_1),\overline{G}_2(x_2)\right],
\end{equation}
with $\overline{G}_1$ and $\overline{G}_2$ respectively denoting the
marginal survival functions:
$$
\overline{G}_1(x_1)=\mathbb{P}(X_1>x_1),\quad\overline{G}_2(x_2)=\mathbb{P}(X_2>x_2)\,.
$$

The relationship between the survival copula $\widehat{C}$ of $(X_1,X_2)$ and the
connecting copula $C$ is given by (see \cite{nelsen2006introduction})
\begin{equation}\label{Csurvival}
\widehat{C}(u,v)=u+v-1+C(1-u,1-v).
\end{equation}

The following result shows the relations tying the different quantities
$\eta(C)$, $\eta(\widehat{C})$, $\eta(C^{t})$.
The proof is easy and will be omitted.

\begin{proposition}\label{tuttiglieta}
Let $C\in\mathcal{C}$. The following relation holds:
$$
\eta(\widehat{C})=\eta(C^t)=1-\eta(C)+\xi(C).
$$
\end{proposition}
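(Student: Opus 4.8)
The plan is to fix a single $G\in\mathcal{G}$ and to realize random variables $X_1,X_2$ having common distribution $G$ and connecting copula $C$; by Proposition~\ref{foranypair} every quantity in the statement may be computed with this one choice. Writing $U_i:=G(X_i)$, the pair $U_1,U_2$ is uniform on $[0,1]$ with joint law $C$, and since $G$ is continuous and strictly increasing on the support of $X_i$ we have $\{X_1\le X_2\}=\{U_1\le U_2\}$ and $\{X_1=X_2\}=\{U_1=U_2\}$ up to null sets. Hence $\eta(C)=\mathbb{P}(U_1\le U_2)$ and $\xi(C)=\mathbb{P}(U_1=U_2)$, while $\eta(C^t)=\mathbb{P}(U_2\le U_1)$, because $C^t$ is the law of $(U_2,U_1)$.

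First I would establish the second equality. The events $\{U_1\le U_2\}$ and $\{U_2\le U_1\}$ cover the whole space and intersect exactly in $\{U_1=U_2\}$, so inclusion--exclusion gives $\mathbb{P}(U_1\le U_2)+\mathbb{P}(U_2\le U_1)-\mathbb{P}(U_1=U_2)=1$, that is $\eta(C^t)=1-\eta(C)+\xi(C)$.

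Then I would prove $\eta(\widehat{C})=\eta(C^t)$. The key point, read off from \eqref{Csurvival}, is that $\widehat{C}$ is the law of $(1-U_1,1-U_2)$: indeed $\mathbb{P}(1-U_1\le u,\,1-U_2\le v)=u+v-1+C(1-u,1-v)$. Equivalently, $\widehat{C}$ is the connecting copula of $(-X_1,-X_2)$, whose marginals again coincide, so $\eta(\widehat{C})$ may be computed from this reflected pair. Applying the reflection $(u,v)\mapsto(1-u,1-v)$, which pushes $dC$ forward to $d\widehat{C}$ and maps the region $\{u\le v\}$ onto $\{u\ge v\}$, we obtain $\eta(\widehat{C})=\mathbb{P}(1-U_1\le 1-U_2)=\mathbb{P}(U_1\ge U_2)=\eta(C^t)$. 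Combining this with the previous step yields the claim.

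The computation is genuinely routine; the only place demanding care is the passage to $\widehat{C}$, where one must correctly identify $\widehat{C}$ with the law of the reflected pair and check that the diagonal $\{U_1=U_2\}$ is handled consistently, so that the possibly positive mass $\xi(C)$ is not double counted. Since the reflection argument computes $\eta(\widehat{C})$ directly as $\mathbb{P}(U_1\ge U_2)$, inclusive of the diagonal, it matches $\eta(C^t)$ exactly and no separate treatment of the equality set is needed.
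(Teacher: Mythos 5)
Your argument is correct, and since the paper explicitly omits the proof of Proposition \ref{tuttiglieta} (``The proof is easy and will be omitted''), there is nothing to compare against except the intended routine computation, which is exactly what you carry out: the inclusion--exclusion identity $\mathbb{P}(U_1\le U_2)+\mathbb{P}(U_2\le U_1)-\mathbb{P}(U_1=U_2)=1$ gives $\eta(C^t)=1-\eta(C)+\xi(C)$, and the identification of $\widehat{C}$ as the law of $(1-U_1,1-U_2)$ via \eqref{Csurvival} gives $\eta(\widehat{C})=\mathbb{P}(U_1\ge U_2)=\eta(C^t)$. Both steps check out, including the careful handling of the diagonal mass $\xi(C)$.
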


A basic property of the classes $\mathcal{L}_{\gamma}$ and $\mathcal{B}_{\gamma}$ is given by the following result.

\begin{proposition}\label{convex}
\label{conv} For $\gamma\in[0,1]$, the classes
$\mathcal{L}_{\gamma}$, $\mathcal{L}_{\gamma}^{c}=\mathcal{C}\setminus\mathcal{L}_{\gamma}$, and
$\mathcal{B}_{\gamma}$ are convex.
\end{proposition}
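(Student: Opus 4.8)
The plan is to observe that the map $C\mapsto\eta(C)$ is affine on the convex set $\mathcal{C}$, so that the three sets in question are respectively a superlevel set, a strict sublevel set, and a level set of an affine functional, each of which is automatically convex. First I would recall that $\mathcal{C}$ itself is convex: the defining conditions of a copula (the boundary conditions $C(u,0)=C(0,v)=0$, $C(u,1)=u$, $C(1,v)=v$, together with the $2$-increasing inequality) are all linear constraints on $C$, so any convex combination $tC_1+(1-t)C_2$ with $t\in[0,1]$ and $C_1,C_2\in\mathcal{C}$ again belongs to $\mathcal{C}$.

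Next, and this is the crux, I would establish the linearity
$$
\eta(tC_1+(1-t)C_2)=t\,\eta(C_1)+(1-t)\,\eta(C_2).
$$
Using the representation $\eta(C)=\int_{A\cap[0,1]^2}dC$ from \eqref{etapuro}, the point is that the probability measure on $[0,1]^2$ induced by the mixture $tC_1+(1-t)C_2$ is exactly $t\,dC_1+(1-t)\,dC_2$; this holds because a bivariate distribution function determines its Lebesgue--Stieltjes measure uniquely and that measure depends linearly on the distribution function. Integrating the indicator of $A\cap[0,1]^2$ against this measure and invoking linearity of the integral then yields the displayed identity. I expect the only step requiring a little care to be precisely this identification of the measure of a mixture with the mixture of the measures, which is where I would be explicit; everything else is formal.

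Given the linearity, the conclusions follow immediately by checking that the defining (in)equalities are preserved under convex combinations. For $\mathcal{L}_{\gamma}$, if $\eta(C_1)\geq\gamma$ and $\eta(C_2)\geq\gamma$ then for $t\in[0,1]$ one gets $\eta(tC_1+(1-t)C_2)=t\,\eta(C_1)+(1-t)\,\eta(C_2)\geq t\gamma+(1-t)\gamma=\gamma$. For $\mathcal{L}_{\gamma}^{c}$ the same computation with the strict inequalities $\eta(C_1)<\gamma$ and $\eta(C_2)<\gamma$ gives $\eta(tC_1+(1-t)C_2)<\gamma$ (for the endpoints $t=0,1$ this is trivial, and for $t\in(0,1)$ both summands contribute a strict inequality). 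For $\mathcal{B}_{\gamma}$, substituting $\eta(C_1)=\eta(C_2)=\gamma$ gives $\eta(tC_1+(1-t)C_2)=\gamma$. In each case the convex combination remains in the set, which proves the convexity of all three classes.
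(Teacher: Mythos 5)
Your proposal is correct and follows essentially the same route as the paper: the paper likewise computes $\eta(\alpha C_1+(1-\alpha)C_2)=\alpha\eta(C_1)+(1-\alpha)\eta(C_2)$ from the representation $\eta(C)=\int_{A\cap[0,1]^2}dC$ and then reads off the three convexity claims from this affinity, using the characterization $\mathcal{L}_\gamma=\{C:\eta(C)\geq\gamma\}$. Your extra care in justifying that the Lebesgue--Stieltjes measure of a mixture is the mixture of the measures is a welcome refinement of a step the paper treats as immediate.
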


\begin{proof}
We consider two bivariate copulas $C_1,C_2\in\mathcal{L}_{\gamma}$ and a convex
combination of them, i.e. take $\alpha\in(0,1)$ and $C:=\alpha C_1+(1-\alpha)C_2$.
We show that $C\in\mathcal{L}_{\gamma}$, indeed
$$
\eta(C)=\int_A dC(u,v)=\alpha\int_A dC_1(u,v)+(1-\alpha)\int_A dC_2(u,v)
=\alpha\eta(C_1)+(1-\alpha)\eta(C_2).
$$
Since $\eta(C_1),\eta(C_2)$ are larger or equal than $\gamma$ then
$\eta(C)\geq\gamma$, whence $\mathcal{L}_{\gamma}$ is convex. Now one can use the
same argument in order to show that $\mathcal{L}_{\gamma}^c$ and
$\mathcal{B}_{\gamma}$ are convex as well.
\end{proof}

An immediate application of Proposition \ref{convex} concerns the case when, given a random parameter $\Theta$,
all the connecting copulas of the conditional distributions of $(T,X)$, belong to a same class $\mathcal{L}_\gamma$.
Proposition \ref{convex} in fact, guarantees that the copula of $(T,X)$ belongs to $\mathcal{L}_\gamma$ as well.

Some aspects of the definitions and results given so far will be demonstrated here by presenting a few examples.
We notice that, as shown by Proposition \ref{BCgamma}, the condition $\preceq_{st}$ does not imply $\preceq_{sp}^{(\gamma)}$,
with $\gamma\in(0,1)$.
For the special case $\gamma=1/2$ we now present an example of applied interest.

\begin{example}\label{ex:1}
\end{example}
Let $X,Y$ be two non-negative random variables, where $Y$ has an exponentially density $f_Y(y)$
with failure rate $\lambda$ and where stochastic dependence between $X$ and $Y$ is described by
a ``load-sharing'' dynamic model as follows: conditionally on $(Y=y)$, the failure rate of
$X$ amounts to $\alpha=1$ for $t<y$ and to $\beta$ for $t>y$. We assume $1<\lambda<\beta<1+\lambda$.
This position gives rise to a jointly absolutely continuous distribution for which
we can consider
$$
\mathbb{P}(X>x|Y=y) := \int_x^{+\infty}f_{X,Y}(t,y)dt,
$$
$f_{X,Y}$ denoting the joint density of $X,Y$.
As to the survival function of $X$, for any fixed value $x>0$, we can argue as follows.
\begin{align*}
\overline{F}_X(x)&:=\mathbb{P}(X>x)=\int_0^{+\infty}\mathbb{P}(X>x|Y=y)f_Y(y)dy\\
&=\int_0^x\mathbb{P}(X>x|Y=y)f_Y(y)dy + \int_x^{+\infty}\mathbb{P}(X>x|Y=y)f_Y(y)dy\\
&=\int_0^x\mathbb{P}(X>y|Y=y)\mathbb{P}(X>x|Y=y,X>y)f_Y(y)dy + \int_x^{+\infty}e^{-x}f_Y(y)dy\\
%&=\int_0^x e^{-y}e^{-b(x-y)}f_Y(y)dy + e^{-(1+\lambda)x}\\
%&=e^{-bx}\frac{\lambda}{1+\lambda-b}[1-e^{-(1+\lambda-b)x}] + e^{-(1+\lambda)x}\\
&=\left(1-\frac{\lambda}{1+\lambda-\beta}\right)e^{-(1+\lambda)x} +
\frac{\lambda}{1+\lambda-\beta}\,e^{-\beta x}\leq e^{-\lambda x}, \quad\forall x>0.%+\frac{\lambda}{1+\lambda-b}\exp\{-(1+\lambda-b)x\}
\end{align*}
We can then conclude that $X\preceq_{st}Y$. On the other hand the same position gives also rise to
$\mathbb{P}(X\leq Y)=1/(1+\lambda)<1/2$.

\medskip

The next example shows that for three random variables $T,X',X''$, the implication
$T\preceq_{st}X^{\prime}\preceq_{st}X^{\prime\prime}\Rightarrow\mathbb{P}(T\leq X^{\prime\prime})\leq\mathbb{P}(T\leq X^{\prime})$
can fail when the connecting copulas of $(T,X')$ and $(T,X'')$ are different.

\begin{example}\label{ex:2}
\end{example}
Let $Y_1,\ldots,Y_5$ be i.i.d. random variables, with a continuous
distribution and defined on a same probability space, and set
\[
T=\min(Y_1,Y_2), \;X^{\prime}=\max(Y_1,Y_2), \;X^{\prime\prime}=\max(Y_3,Y_4,Y_5).
\]
Thus $X^{\prime}\preceq_{st}X^{\prime\prime}$, but $\mathbb{P}(T\leq X^{\prime})=1$ and
$\mathbb{P}(T\leq X^{\prime\prime})<1$.

\bigskip

\begin{remark}\label{rem:contidiretti}
For some special types of copula $C$, the computation of $\eta(C,G_1,G_2)$ can be carried out directly,
in terms of probabilistic arguments, provided the distributions $G_1,G_2$ belong to some appropriate class.
This circumstance in particular manifests for the models considered in the subsequent examples.
Let $C$ be a copula satisfying such conditions. Then Proposition \ref{eta_ineq} can be used to obtain
inequalities for $\eta(C,H_1,H_2)$ even if $H_1,H_2$ do not belong to $\mathcal{G}$ provided, e.g.,
that $H_1\preceq_{st}G_1$, $G_2\preceq_{st}H_2$ and $G_1,G_2\in\mathcal{G}$.
\end{remark}

The next example will be devoted to bivariate gaussian models, i.e. to a relevant case of symmetric copulas.

%\newpage

\begin{example}
\textbf{Gaussian Copulas.}
\end{example}

The family of bivariate gaussian copulas (see e.g. \cite{nelsen2006introduction})
is parameterized by the correlation coefficient
$\rho\in(-1,1)$. The corresponding copula $C^{(\rho)}$ is absolutely continuous and symmetric, and
$\eta(C^{(\rho)})=1/2$ and, thus, it does not depend on $\rho$.
For fixed pairs of distributions $G_1,G_2$, on the contrary, the quantity $\eta(C^{(\rho)},G_1,G_2)$
does actually depend on $\rho$, besides on $G_1$ and $G_2$.
This class provides the most direct instance of the situation outlined in the above Remark
\ref{rem:contidiretti}.
The value for $\eta(C^{(\rho)},G_1,G_2)$ is in fact immediately obtained when $G_1,G_2$ are gaussian.
Let $X_1,X_2$ denote gaussian random variables with connecting copula $C^{(\rho)}$ and parameters
$\mu_1,\mu_2,\sigma_1^2,\sigma_2^2$. Since the random variable $Z=X_1-X_2$ is distributed according to $\mathcal{N}(\mu_1-\mu_2,\sigma^2_1+\sigma^2_2-2\rho\sigma_1\sigma_2)$ we can write
\begin{equation}\label{EtaGaussianoSpec}
\eta(C^{(\rho)},G_1,G_2)=
\mP(Z\leq0)=\Phi\left(\frac{\mu_2-\mu_1}{\sqrt{\sigma^2_1+\sigma^2_2-2\rho\sigma_1\sigma_2}}\right).
\end{equation}

We recall that, when $X_i\sim \mathcal{N}(\mu_i,\sigma_i^2)$ for $i=1,2$, the necessary and sufficient condition for
$X_1\preceq_{st}X_2$ is $\mu_1\leq\mu_2$ and $\sigma_1=\sigma_2$ (see e.g. \cite{AKS}).
In other words, for $G_1,G_2$ gaussian, $G_1\preceq_{st}G_2$ means $X_1\preceq_{sp}X_2$ and $\sigma_1=\sigma_2$.
By using the formula in \eqref{EtaGaussianoSpec}, with $\sigma_1=\sigma_2=\sigma$, we have
\begin{equation}\label{EtaGaussiano}
\eta(C^{(\rho)},G_1,G_2)=\Phi\left(\frac{\mu_2-\mu_1}{\sigma\sqrt{2(1-\rho)}}\right).
\end{equation}
Thus $G_1\preceq_{st}G_2\Rightarrow\eta(C^{(\rho)},G_1,G_2)\geq 1/2$, as shown by Proposition \ref{etainf} and Theorem \ref{continuo}.
We notice that $\eta(C^{(\rho)},G_1,G_2)$ is an increasing function of $\rho$.

Proposition \ref{eta_ineq} can be extended to obtain, say, that
$$
\eta(C^{(\rho)},G_1,G_2)\leq\eta(C^{(\rho)},H_1,H_2),
$$
when $H_1\preceq_{st}G_1$ and $G_2\preceq_{st}H_2$, for $G_1,G_2\in\mathcal{G}$ and $H_1,H_2\notin\mathcal{G}$.
We then can give inequalities for $\eta(C^{(\rho)},H_1,H_2)$
in terms of \eqref{EtaGaussianoSpec}, provided $H_1,H_2$ are suitably comparable in the
$\preceq_{st}$ sense with gaussian distributions.

\bigskip

In the cases when $\xi(C)>0$, we should obviously distinguish between computations of
$\mathbb{P}(X_1\leq X_2)$ and $\mathbb{P}(X_1<X_2)$, where $C$ is the connecting copula of $X_1,X_2$.
A remarkable case when this circumstance happens is considered in the following example.

\bigskip

\begin{example}
\textbf{Marshall-Olkin Models}
\end{example}

We consider the Marshall-Olkin copulas (see e.g \cite{joe1997,MS1987,nelsen2006introduction}),
namely those whose expression is the following:
$$
\widehat{C}^{(\alpha_1,\alpha_2)}(u,v):=u\,v\,\min\{u^{-\alpha_1},\,v^{-\alpha_2}\}
$$
for $0<\alpha_i<1$, $i=1,2$.
We notice that the Marshall-Olkin copula has a singular part that is concentrated on the curve $u^{\alpha_1}=v^{\alpha_2}$
(see also Figure \ref{fig:MO}). %, linked to the fact that the probability $\mP(X_1=X_2)$ is positive.
Actually the measure of such a singular component is given by
$$
\frac{\alpha_1\alpha_2}{\alpha_1+\alpha_2-\alpha_1\alpha_2}.
$$

\begin{figure}[ht]
  \includegraphics[width=0.3\textwidth,keepaspectratio]{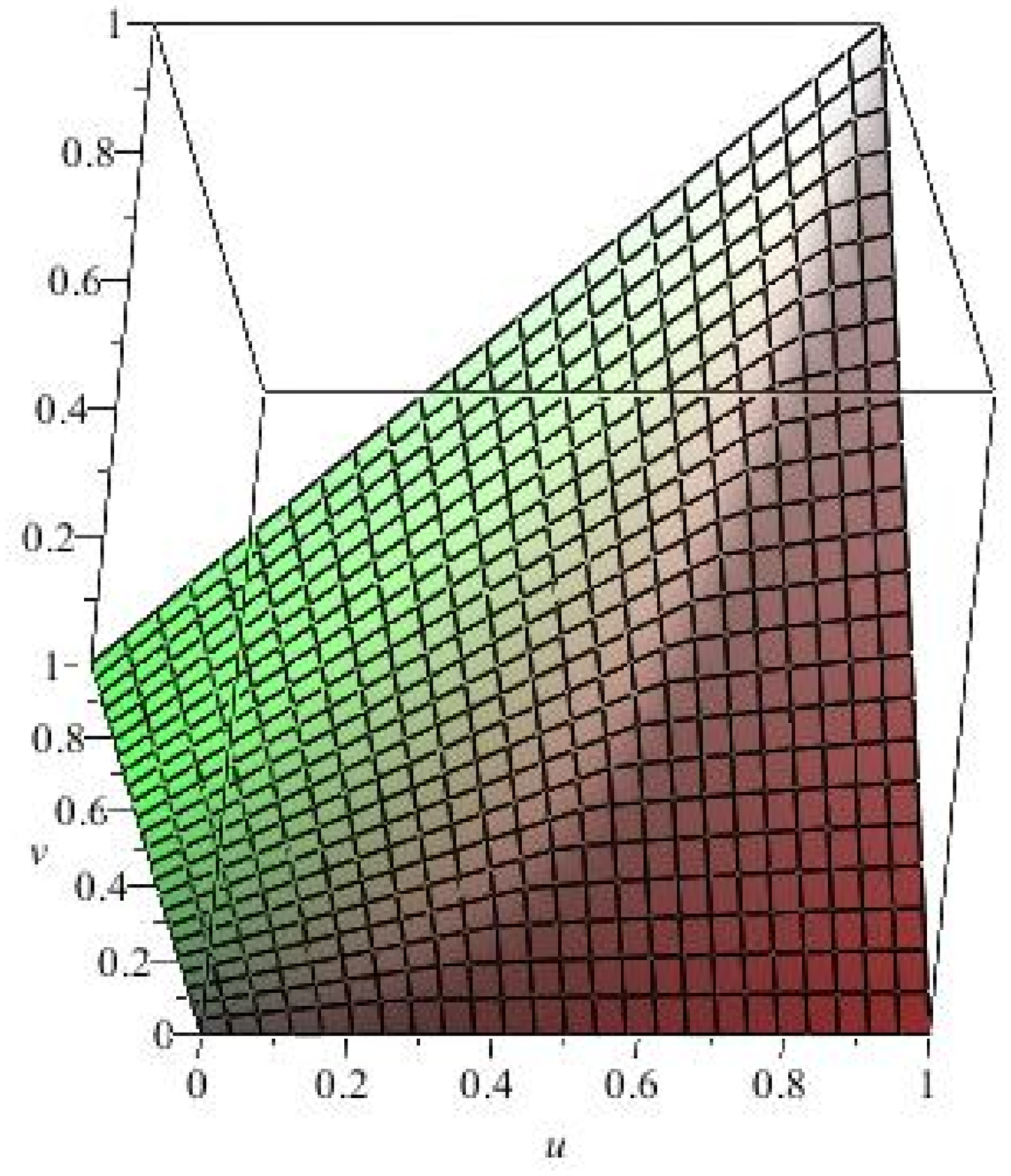}
  \qquad  \qquad
  \includegraphics[width=0.27\textwidth,keepaspectratio]{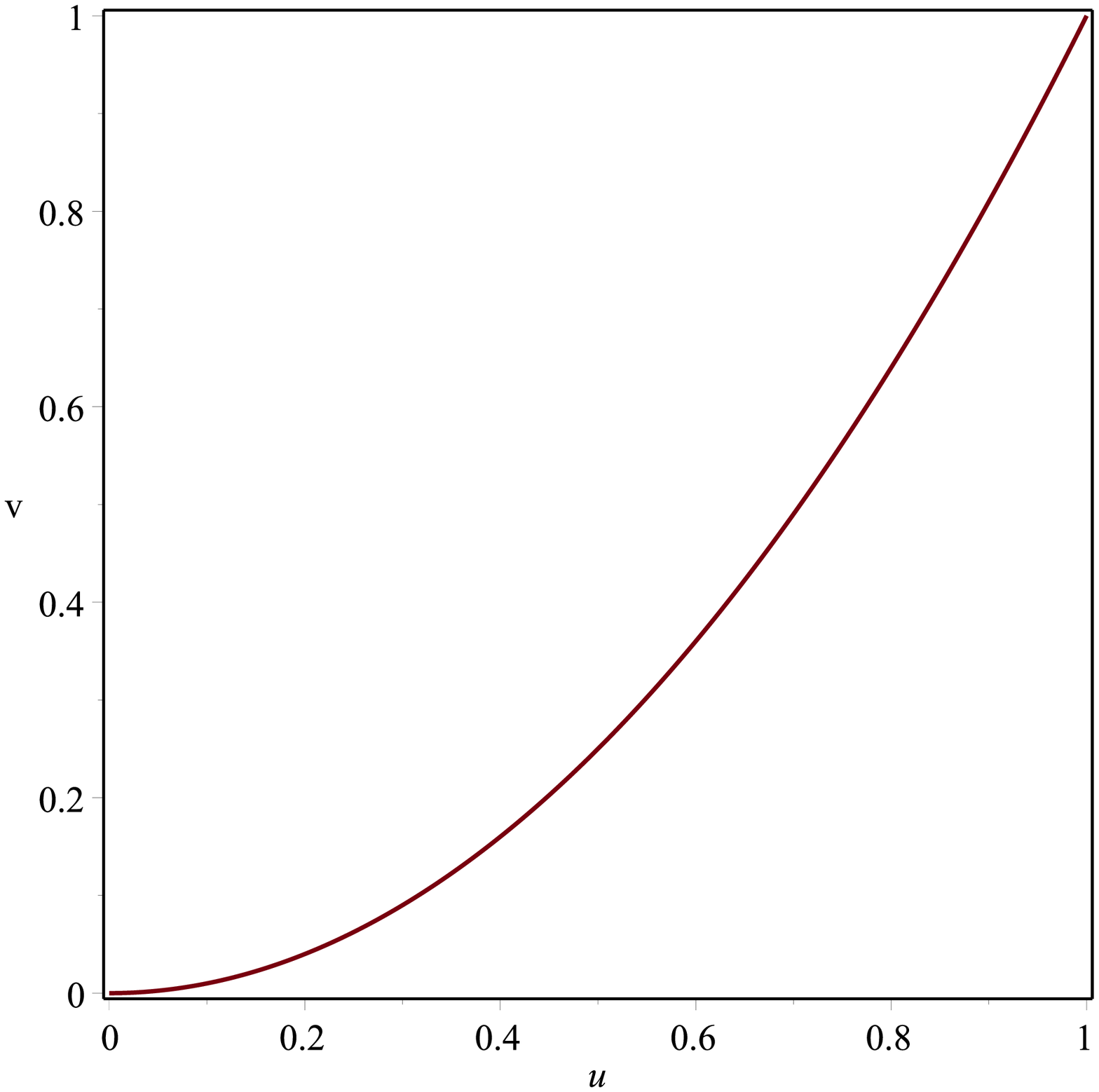}
  \caption{\small{Marshall-Olkin Copula (left) and graph of $u^{\alpha_1}=v^{\alpha_2}$ (right). Special case $\alpha_1=0.4,\,\alpha_2=0.2$.}}
  \label{fig:MO}
\end{figure}

As for the computation of $\eta(\widehat{C}^{(\alpha_1,\alpha_2)})$ we use the expression in \eqref{etapuro}.
By separately considering the curve $u^{\alpha_1}=v^{\alpha_2}$ and the domains where $\widehat{C}^{(\alpha_1,\alpha_2)}$ is absolutely continuous, we obtain
$$
\eta(\widehat{C}^{(\alpha_1,\alpha_2)})=\frac 1{2-\alpha_1\wedge\alpha_2}
\left(1-\frac{(\alpha_1-\alpha_1\wedge\alpha_2)(\alpha_1\wedge\alpha_2)}{\alpha_1-\alpha_2}\right).
$$
Consider the copula
$$
C^{(\alpha_1,\alpha_2)}(u,v):=\widehat{C}^{(\alpha_1,\alpha_2)}(1-u,1-v)+u+v-1.
$$
We will see now that the value of $\eta(C^{(\alpha_1,\alpha_2)},G_1,G_2)$ directly follows from probabilistic arguments,
provided $G_1,G_2$ are exponential distributions with appropriate parameters.
Let in fact $V$, $W$ and $Z$ be three random variables independent and exponentially distributed with
parameters $\mu_1=1/\alpha_1-1$, $\mu_2=1/\alpha_2-1$ and $\mu=1$, respectively. The new random variables
$$
X_1:=V\wedge Z,\qquad X_2:=W\wedge Z,
$$
have survival copula $\widehat{C}^{(\alpha_1,\alpha_2)}$, connecting copula $C^{(\alpha_1,\alpha_2)}$, and exponential distributions $G_1^{(\alpha_1)}$ and $G_2^{(\alpha_2)}$, with parameters $1/\alpha_1$ and $1/\alpha_2$ respectively.
We now proceed with the computation of
$$
\eta(C^{(\alpha_1,\alpha_2)},G_1^{(\alpha_1)},G_2^{(\alpha_2)})=\mP(X_1\leq X_2).
$$
We can write
\begin{align*}
\xi(C^{(\alpha_1,\alpha_2)},G_1^{(\alpha_1)},G_2^{(\alpha_2)})&=\mP(X_1=X_2)=\mP(Z\leq V\wedge W)\\
&=\frac{1}{\mu_1+\mu_2+1}=\frac{\alpha_1\alpha_2}{\alpha_1+\alpha_2-\alpha_1\alpha_2},
\end{align*}
furthermore
\begin{equation*}
\mP(X_1<X_2)=\mP(V<W\wedge Z)=\frac{(1-\alpha_1)\,\alpha_2}{\alpha_1+\alpha_2-\alpha_1\alpha_2},
\end{equation*}
and finally we obtain
$$
%\eta(C^{(\alpha_1,\alpha_2)},G_1^{(\alpha_1)},G_2^{(\alpha_2)})
%=\frac{\mu+\mu_1}{\mu_1+\mu_2+\mu}
\mP(X_1\leq X_2)=\frac{\alpha_2}{\alpha_1+\alpha_2-\alpha_1\alpha_2}.
$$
Then
$$
\eta(C^{(\alpha_1,\alpha_2)},G_1^{(\alpha_1)},G_2^{(\alpha_2)})
%=\frac{\mu+\mu_1}{\mu_1+\mu_2+\mu}
%\mP(X_1\leq X_2)
=\frac{\alpha_2}{\alpha_1+\alpha_2-\alpha_1\alpha_2}.
$$

We now conclude this Section with an example showing an extreme case in the direction of Remark \ref{rem:contidiretti}.

\begin{example}
\textbf{Copulas of order statistics.}
\end{example}

Let $A,B$ be two i.i.d. random variables with distribution function
$G\in\mathcal{G}$ and denote by $X_1,X_2$ their order statistics, namely
$X_1=\min\{A,B\},X_2=\max\{A,B\}$.
The distributions of $X_1,X_2$ depend on $G$ and are respectively given by
\begin{align*}
F_1^{(G)}(x_1)&=\mP(\min\{X_1,X_2\}\leq x_1)=2G(x_1)-G(x_1)^2,\\
F_2^{(G)}(x_2)&=\mP(\max\{X_1,X_2\}\leq x_2)=G(x_2)^2\,.
\end{align*}
The connecting copula of $(X_1,X_2)$, represented in Figure \ref{fig:stat_ord}, is given by
$$
K(u,v)=\left\{
\begin{array}{ll}
\displaystyle{2(1-(1-u)^{1/2})v^{1/2}-(1-(1-u)^{1/2})^{2}} & \mbox{ if } v\geq(1-(1-u)^{1/2})^2,\\
\displaystyle{v} & \mbox{ otherwise}.
\end{array}
\right.
$$
We have, by definition,
$$
\eta(K,F_1^{(G)},F_2^{(G)})=1,
$$
and it does not depend on $G$. We notice, on the other hand, that the computation of $\eta(K)=\eta(K,G,G)$,
with $G\in\mathcal{G}$, is to be carried out explicitly, since the pair $(G,G)$ can never appear as the
pair of marginal distributions of order statistics. By recalling \eqref{formulautile} one obtains
$$
\eta(K)=\int_{[0,1]^2}\frac{\II_A(u,v)}{2\sqrt{v}\sqrt{1-u}}\,dv\,du=2-\frac{\pi}{2}<\frac 12.
%\quad\mbox{ and }\quad\nu(K)=|2\eta(K)-1|=\pi-3\,.
$$
We can extend this example to the case when the connecting copula of $A,B$ is a copula $D$ different from the product
copula $\Pi$, but still $A$ and $B$ are identically distributed according to a distribution function $G$.
In this case the connecting copula $K$ of $X_1,X_2$ depends on $D$, but again it does not depend on $G$
(see \cite{navarro2010relationships} page 478).

\begin{figure}[ht]
  \includegraphics[width=0.3\textwidth,keepaspectratio]{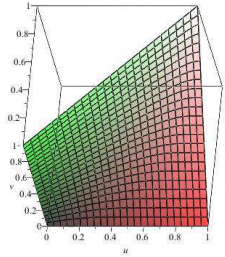}
    \caption{\small{Ordered Statistic Copula K}}
    \label{fig:stat_ord}
\end{figure}

\section{The Target-Based Approach to decisions under risk and the classes $\mathcal{L}_{\gamma}$.}\label{sec4}

In this Section we will look at the arguments of the previous Sections in the
perspective of one-attribute decisions problems under risk and, more in
particular, of the related \textit{Target-Based Approach} (\textit{TBA}).
In such problems, a \textit{risky prospect} (or \textit{lottery}) $X$ is
nothing else than a real random variable representing, say, the random amount
of wealth obtained as the consequence of an action or of an economic
investment. An investor (or decision-maker) $I$ is supposed to choose one out
of many different actions by evaluating and comparing the different
probability distributions corresponding to any single prospect. This choice
is implemented on the basis of $I$'s attitudes toward risk.

As very well-known, in such a frame, the \textit{Expected Utility Principle}
first suggests that $I$ describe her/his own attitudes by means of a utility
function $U$ ($U:\mathbb{R}\rightarrow\mathbb{R}$) and consequently prescribes
that any prospect $X$ (with its probability distribution $F_X$) be evaluated
in terms of the expected-utility
\[
\mathbb{E}(U(X))=\int_\mathbb{R} U(x)F_X(dx).
\]

In the same frame, the \textit{Target-Based Approach} is based on a different principle.
The TBA assumes in fact that the exclusive
interest of the investor $I$, in the use of the amount of wealth $X$, is concentrated on the possibility
of ``buying'' a specific good (a house, a car, a block of shares of a stock, etc.). The price of such good is a random variable
$T$ (the \textit{target}), with a probability distribution $F_T$.
Whence $I$ is for first supposed to specify the target $T$ as a way to describe
his/her own attitude with respect to risk. Then $I$ will evaluate any single prospect $X$ in terms of
the probability $\mP(T\leq X)$. The best prospect will be the one that maximizes $\mP(T\leq X)$.
Such an approach was proposed by Bordley, Li Calzi,
and Castagnoli (see \cite{BorLic00,CasLic96}). Some related ideas
were already around in the economic literature in the past and other
interesting developments appeared in the subsequent years, especially for what
concerns the multi-attribute setting. Generally $T$ and $X$ may in fact also be vectors.

Here we concentrate
attention on the single-attribute case where $(T,X)$ are pairs
of real-valued random variables. It is clear then that the objects of central
interest in the TBA are, for a fixed target $T$, the probabilities
$\mathbb{P}(T\leq X)$ and the analysis developed in the
previous sections can reveal of interest.
We assume the existence of regular conditional distributions. In particular we
assume that, for any prospect $X$, we can determine
$\upsilon_T^{(X)}(x):=\mathbb{P}(T\leq x|X=x)$, so that we can write
\begin{equation*}
\mathbb{P}(T\leq X)=\int_\mathbb{R}\upsilon_T^{(X)}(x)\,dF_X(x).
\end{equation*}

Before continuing it is useful to look at the special case when $X$ and $T$ are stochastically independent.
We can thus write
\begin{align*}
\mathbb{P}(T\leq X) &  =\int_{\mathbb{R}}\upsilon_T^{(X)}(x)\,dF_X(x)
%=\int_\mathbb{R}\mathbb{P}(T\leq x)\,dF_X(x)
=\int_\mathbb{R}F_T(x)\,dF_X(x).
\end{align*}

We notice that, in such a case, $\mathbb{P}(T\leq X)$ can be seen as the
expected value of a utility: by considering $U=F_T$ as the utility function,
we have
\[
\mathbb{E}(U(X))=\int_\mathbb{R}U(x) \,dF_X(x)=\int_\mathbb{R} F_T(x) \,dF_X(x)=\mathbb{P}(T\leq X).
\]
Under the condition of independence, any bounded and right-continuous utility function can thus be seen as the
distribution function of a target $T$, and vice-versa. Such an approach gives
rise to easily-understandable and practically useful interpretations of
several notions of utility theory.
TBA however becomes, in a sense, more general than the expected utility
approach by allowing for stochastic dependence between targets and prospects.
In fact the TBA considers more general decision rules, if we admit the
possibility of some correlation between the target and the prospects. If
$X$ and $T$ are not independent, $\upsilon_T^{(X)}(x)$ does not
coincide anymore with the distribution function $F_T(x)$.
For further discussion see again \cite{BorLic00,CasLic96}.

We now briefly summarize the arguments of Sections \ref{sec2} and \ref{sec3} in the perspective
of a decision problem where, for a fixed target $T$, we aim to rank two
different prospects with marginal distributions $G_{X_1},G_{X_2}$ and with
connecting copulas $C_1,C_2$, corresponding to the pairs
$(T,X_1)$ and $(T,X_2)$, respectively.

In the case of independence, a prospect $X_2$ should be obviously preferred
to a prospect $X_1$ if $X_1\preceq_{st}X_2$. In the case of
dependence, on the contrary, this comparison is not sufficient anymore. In
fact the choice of a prospect $X$ should be based not only on the
corresponding distribution $F_X$, but also on the connecting copula of the
pair $(T,X)$.

For fixed $C$, the quantity $\eta(C,G_T,G_X)=\mP(T\leq X)$
is equal to the quantity $\eta(C)$ for all pairs such that $G_T=G_X=G$
with $G$ belonging to $\mathcal{G}$ (See Proposition ~\ref{foranypair}). For $G_T\neq G_X$,
the implication $T\preceq_{st}X\Rightarrow \mP(T\leq X)\geq \gamma$
does not necessarily hold (see Proposition \ref{BCgamma} Example \ref{ex:1}). For two different prospects $X_1,X_2$,
Proposition ~\ref{eta_ineq} guarantees that, when
$C_1=C_2=C$, the condition $G_T\preceq_{st}G_{X_1}\preceq_{st}G_{X_2}$ implies
$\eta(C,G_T,G_{X_1})=\mP(T\leq X_1)\leq\eta(C,G_T,G_{X_2})=\mP(T\leq X_2)$.
As shown by Example ~\ref{ex:2}, when $C_1\neq C_2$, we can have both the conditions
$\eta(C_1,G_T,G_{X_1})>\eta(C_2,G_T,G_{X_2})$ and $G_T\preceq_{st}G_{X_1}\preceq_{st}G_{X_2}$ $(G_{X_1}\neq G_{X_2})$.
Concerning the quantities $\eta(C_1,G_T,G_{X_1})$ and $\eta(C_2,G_T,G_{X_2})$,
Theorems \ref{Lgamma} and \ref{continuo} show that, for $G_T\preceq_{st}G_{X_i}$ ($i=1,2$),
$$
\mP(T\leq X_i) = \eta(C_i,G_T,G_{X_i})\geq\eta(C_i).
$$

Let us consider the case when the only available information about $C_1$ and $C_2$
is that $\eta(C_i)\geq\gamma_i$ (i.e. that $C_i$ belongs to the class
$\mathcal{L}_{\gamma_i}$). Then a rough and conservative choice between
$X_1$ and $X_2$ suggests to select $X_i$ with the larger value of
$\gamma_i$, provided $G_{X_1}\preceq_{st}G_{X_2}$ or that
$X_1,X_2$ are nearly identically distributed.

\bigskip

%\nocite{*}

\bibliographystyle{abbrv}

\begin{thebibliography}{10}

\bibitem{AKS}
M.~A. Arcones, P.~H. Kvam, and F.~J. Samaniego.
\newblock Nonparametric estimation of a distribution subject to a stochastic
  precedence constraint.
\newblock {\em J. Amer. Statist. Assoc.}, 97(457):170--182, 2002.

\bibitem{billingsley2009convergence}
P.~Billingsley.
\newblock {\em Convergence of Probability Measures}.
\newblock Wiley Series in Probability and Statistics. Wiley, 2009.

\bibitem{BolSin}
P.~J. Boland, H.~Singh, and B.~Cukic.
\newblock The stochastic precedence ordering with applications in sampling and
  testing.
\newblock {\em J. Appl. Probab.}, 41(1):73--82, 2004.

\bibitem{BorLic00}
R.~Bordley and M.~LiCalzi.
\newblock Decision analysis using targets instead of utility functions.
\newblock {\em Decisions in Economics and Finance}, 23(1):53--74, 2000.

\bibitem{CasLic96}
E.~Castagnoli and M.~LiCalzi.
\newblock Expected utility without utility.
\newblock {\em Theory and Decision}, 41(3):281--301, 1996.

\bibitem{DS2012}
E.~De~Santis and F.~Spizzichino.
\newblock First occurrence of a word among the elements of a finite dictionary
  in random sequences of letters.
\newblock {\em Electron. J. Probab.}, 17:1--9, 2012.

\bibitem{joe1997}
H.~Joe.
\newblock {\em Multivariate models and dependence concepts}, volume~73 of {\em
  Monographs on Statistics and Applied Probability}.
\newblock 1997.

\bibitem{MS1987}
P.~Muliere and M.~Scarsini.
\newblock Characterization of a {M}arshall-{O}lkin type class of distributions.
\newblock {\em Ann. Inst. Statist. Math.}, 39(2):429--441, 1987.

\bibitem{NavRub}
J.~Navarro and R.~Rubio.
\newblock Comparisons of coherent systems using stochastic precedence.
\newblock {\em TEST}, 19:469--486, 2010.

\bibitem{navarro2010relationships}
J.~Navarro and F.~Spizzichino.
\newblock On the relationships between copulas of order statistics and marginal
  distributions.
\newblock {\em Statistics \& probability letters}, 80(5):473--479, 2010.

\bibitem{nelsen2006introduction}
R.~Nelsen.
\newblock {\em An Introduction to Copulas}.
\newblock Springer Series in Statistics. Springer, 2006.

\bibitem{Nelsen2007}
R.~Nelsen.
\newblock Extremes of nonexchangeability.
\newblock {\em Statistical Papers}, 48(4):695--695, 2007.

\bibitem{Sempi2004}
C.~Sempi.
\newblock Convergence of copulas: critical remarks.
\newblock {\em Rad. Mat.}, 12(2):241--249, 2004.

\bibitem{ShSh}
M.~Shaked and J.~G. Shanthikumar.
\newblock {\em Stochastic orders}.
\newblock Springer Series in Statistics. Springer, New York, 2007.

\bibitem{SibSto2008}
K.~Siburg and P.~Stoimenov.
\newblock Symmetry of functions and exchangeability of random variables.
\newblock {\em Statistical Papers}, 52(1):1--15, 2011.

\end{thebibliography}

\end{document}